\theoremstyle{plain}
\newtheorem{theorem}{Theorem}[section]
\newtheorem{lemma}[theorem]{Lemma}
\newtheorem{corollary}[theorem]{Corollary}
\theoremstyle{definition}
\newtheorem{definition}[theorem]{Definition}
\newtheorem{problem}[theorem]{Problem}
\newtheorem{example}[theorem]{Example}
\newtheorem{remark}[theorem]{Remark}
\numberwithin{equation}{section}
\newcommand{\all}{\hbox{for all}}
\newcommand{\bra}[2]{\langle#1,#2\rangle}
\newcommand{\bigcupn}{\bigcup\nolimits}
\newcommand{\Bra}[2]{\big\langle#1,#2\big\rangle}
\newcommand{\dbs}{^{**}}
\newcommand{\dom}{\hbox{\rm dom}}
\newcommand{\eps}{\varepsilon}
\newcommand{\F}{{\mathbb F}}
\newcommand{\half}{{\textstyle\frac{1}{2}}}
\newcommand{\ifff}{\Longleftrightarrow}
\newcommand{\infn}{\inf\nolimits}
\newcommand{\intr}{\hbox{\rm int}}
\newcommand{\lr}{\Longrightarrow}
\newcommand{\PCLSC}{{\cal PCLSC}}
\newcommand{\qlr}{\quad\Longrightarrow\quad}
\newcommand{\quand}{\quad\hbox{and}\quad}
\newcommand{\rbar}{\,]{-}\infty,\infty]}
\newcommand{\RR}{\mathbb R}
\newcommand{\rl}{\Longleftarrow}
\newcommand{\supn}{\sup\nolimits}
\newcommand{\toto}{\rightrightarrows}
\newcommand{\UT}{{\wt U}}
\newcommand{\wh}{\widehat}
\newcommand{\wt}{\widetilde}
\newcommand{\WT}{{\wt W}}
\newcommand{\Def}{Definition~\ref}
\newcommand{\Ex}{Example~\ref}
\newcommand{\Lem}{Lemma~\ref}
\newcommand{\Prob}{Problem~\ref}
\newcommand{\Probs}{Problems~\ref}
\newcommand{\Sec}{Section~\ref}
\newcommand{\Thm}{Theorem~\ref}
\newcommand{\Thms}{Theorems~\ref}
\title{Quasidensity: a survey and some examples}
\author{
Stephen Simons
\thanks{
Department of Mathematics, University of California, Santa Barbara, CA\ 93106-3080, U.S.A.
Email: \texttt{stesim38@gmail.com}.}}
\date{}
\begin{document}

\maketitle

\begin{abstract}\noindent
In three previous papers, we discussed quasidense multifunctions from a Banach space into its dual, or, equivalently, quasidense subsets of the product of a Banach space and its dual.  In this paper, we survey (without proofs) some of the main results about quasidensity, and give some simple limiting examples in Hilbert spaces, reflexive Banach spaces, and nonreflexive Banach spaces.
\end{abstract}

{\small \noindent {\bfseries 2010 Mathematics Subject Classification:}
{Primary 47H05; Secondary 47N10, 52A41, 46A20.}}

\noindent {\bfseries Keywords:} Multifunction, maximal monotonicity, quasidensity, sum theorem, subdifferential, strong maximality, type (FPV), type (FP).


\section{Introduction}
This is a sequel to the papers \cite{PARTTWO} and \cite{SW},  in which we discussed {\em quasidense} multifunctions from a Banach space into its dual.   A number of the results in \cite{PARTTWO} depend on the somewhat more abstract analysis that appears in \cite{PARTONE}.
\par
In \Sec{BANsec}, we give some Banach space notation and definitions.
\par
Let $S$ be a multifunction (not assumed to be monotone) from a Banach space into its dual.   We define the {\em quasidensity} of $S$ in \Def{QDdef}.   In \Thm{SWthm}, we establish that the (appropriately defined) subdifferential of a proper (not necessarily convex) lower semicontinuous function is quasidense, and we show in the simple \Ex{RCASE} that the condition \eqref{SW1}, which is sufficient for the quasidensity, is not necessary.   
\par
In \Sec{MONsec}, we start our investigation of {\em monotone} multifunctions and collect together some of the results that were proved in \cite{PARTTWO}, with references to the original proofs in \cite{PARTTWO} or \cite{PARTONE}, as the case may be.   We point out in \Thm{RLMAXthm} and \Ex{TAILex} that every closed, monotone quasidense multifunction is maximally monotone, but that there exist maximally monotone linear operators that are not quasidense.   We point out in \Thm{RTRthm} that the subdifferental of any proper, convex lower semicontinuous function is quasidense.   By virtue of \Thm{RLMAXthm}, \Thm{RTRthm} generalizes Rockafellar's result that such subdifferentials are maximally monotone.   In \Thm{STDthm} we prove that the sum of a pair of closed, monotone quasidense multifunctions that satisfies the Rockafellar constraint qualification is closed, monotone and quasidense.   We note that it is apparently not known whether the sum of a pair of maximally monotone multifunctions that satisfies the Rockafellar constraint qualification is necessarily maximally monotone.   (This is known as the {\em sum problem}.)   In \Thm{STRthm} we give a ``parallel'' sum theorem for a pair of closed, monotone quasidense multifunctions that satisfy the ``dual'' of the Rockafellar constraint qualification.   In the process of doing this we introduce the {\em Fitzpatrick function} and {\em Fitzpatrick extension} of a closed, monotone, quasidense multifunction.   In \Probs{AFMAXprob} and \ref{FITZDIFFprob} we give two questions that merit further study.   
\par
Quasidensity has connections with many of the subclasses of the maximally monotone multifunctions that have been investigated over the years.   We explore just three of these in \Sec{CLASSsec}: {\em type (FPV)}, {\em type (FP)} and {\em strongly maximal}.   \Probs{FPVprob}, \ref{FPprob} and \ref{STRONGprob} contain open questions about these three subclasses of the maximally monotone multifunctions.   Other related subclasses are discussed in \cite[Theorem 8.1]{PARTTWO}, \cite[Theorem 8.2]{PARTTWO}, \cite[Theorem 11.6, p.\ 1045]{PARTONE} and \cite[Theorem 11.9, pp.\ 1045--1046]{PARTONE}.
\par
In the final three sections, we show how quasidensity behaves in three special cases: Hilbert spaces in \Sec{Hsec}, reflexive Banach spaces in \Sec{Rsec} and nonreflexive Banach spaces in \Sec{NRsec}.
\par
The author would like to express his thanks to Hedy Attouch and Heinz Bauschke for constructive discussions about the topics discussed in this paper.   He would also like to thank Xianfu Wang for constructive comments about an earlier version of this paper. 
\section{Banach space notation and definitions}\label{BANsec}
If $X$ is a nonzero real Banach space and $f\colon\ X \to \rbar$, we write $\dom\,f$ for the set $\big\{x \in X\colon\ f(x) \in \RR\big\}$.   $\dom\,f$ is the {\em effective domain} of $f$.   We say that $f$ is {\em proper} if $\dom\,f \ne \emptyset$.   We write $\PCLSC(X)$ for the set of all proper convex lower semicontinuous functions from $X$ into $\rbar$.   We write $X^*$ for the dual space of $X$ \big(with the pairing $\bra\cdot\cdot\colon X \times X^* \to \RR$\big).  If $f \in \PCLSC(X)$ then, as usual, we define the {\em Fenchel conjugate}, $f^*$, of $f$ to be the function on $X^*$ given by $x^* \mapsto \supn_X\big[x^* - f\big]$.
We write $X\dbs$ for the bidual of $X$ \big(with the pairing $\bra\cdot\cdot\colon X^* \times X\dbs \to \RR$\big).   If $f \in \PCLSC(X)$ and $f^* \in \PCLSC(X^*)$, we define $f\dbs\colon X\dbs \to \rbar$ by $f\dbs(x\dbs) := \sup_{X^*}\big[x\dbs - f^*\big]$.   If $x \in X$, we write $\wh x$ for the canonical image of $x$ in $X\dbs$, that is to say, for all $(x,x^*) \in X \times X^*$,\break $
\bra{x^*}{\wh x} = \bra{x}{x^*}$.\quad If $f \in \PCLSC(X)$ then the {\em convex subdifferential of} $f$ is the multifunction $\partial f\colon\ E  \toto E^*$ that satisfies
$$x^* \in \partial f(x) \ifff f(x) + f^*(x^*) = \bra{x}{x^*}.$$
\par
We suppose that $E$ is a nonzero real Banach space with dual $E^*$.   For all $(x,x^*) \in E \times E^*$, we write $\|(x,x^*)\| := \sqrt{\|x\|^2 + \|x^*\|^2}$.   We represent $(E \times E^*)^*$ by $E^* \times E\dbs$, under the pairing
$$\Bra{(x,x^*)}{(y^*,y\dbs)} := \bra{x}{y^*} + \bra{x^*}{y\dbs}.$$
The dual norm on $E^* \times E\dbs$ is given by  $\|(y^*,y\dbs)\| := \sqrt{\|y^*\|^2 + \|y\dbs\|^2}$.
\smallbreak
Now let $S\colon\ E \toto E^*$.   We write $G(S)$ for the graph of $S$, $D(S)$ for the domain of $S$ and $R(S)$ for the range of $S$.
We will always suppose that $G(S) \ne \emptyset$ (equivalently, $D(S) \ne \emptyset$ or $R(S) \ne \emptyset$).   We say that $S$ is {\em closed} if $G(S)$ is closed.   If $x \in E$, we define the multifunction $_xS\colon E \toto E^*$ by $_xS = (S^{-1} - x)^{-1}$. Then ${_x}S(t) = S(t + x)$.   We write $J\colon E \toto E^*$ for the {\em duality map}.   We recall that $J$ is maximally monotone and
\begin{equation}\label{J1}
x^* \in Jx \iff \half\|x\|^2 + \half\|x^*\|^2 = \bra{x}{x^*} \iff \|x\|^2 = \|x^*\|^2 = \bra{x}{x^*}.
\end{equation}
\section{Quasidensity}\label{QDsec}    
\begin{definition}\label{QDdef}
We say that $S$ is {\em quasidense} if, for all $(x,x^*) \in E \times E^*$,
\begin{equation*}
\infn_{(s,s^*) \in G(S)}\big[\half\|s - x\|^2 + \half\|s^* - x^*\|^2 + \bra{s - x}{s^* - x^*}\big] \le 0.
\end{equation*}
See \cite[Definition 3.1]{PARTTWO} and \cite[Example 7.1, eqn.\ (28), p.\ 1031)]{PARTONE}.
\end{definition}
We have the following simple result connecting $J$ and quasidensity:
\begin{lemma}\label{Jlem}
Let $S\colon\ E \toto E^*$ and, for all $x \in E$, $_xS + J$ be surjective.   Then $S$ is quasidense. 
\end{lemma}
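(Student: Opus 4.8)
The plan is to verify the defining inequality of \Def{QDdef} directly for an arbitrary fixed $(x,x^*) \in E \times E^*$. Since what must be shown is that the infimum over $G(S)$ is $\le 0$, it suffices to exhibit a single pair $(s,s^*) \in G(S)$ for which the bracketed quantity is $\le 0$; in fact I would arrange for it to vanish. The mechanism is to force the relation $x^* - s^* \in J(s-x)$: by the self-dual characterization \eqref{J1} of the duality map, such a relation reads $\half\|s-x\|^2 + \half\|x^*-s^*\|^2 = \bra{s-x}{x^*-s^*}$, and rearranging this is precisely the statement that the three summands in \Def{QDdef} add to $0$.

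To produce such a pair, I would apply the hypothesis using the first coordinate $x$ of the given point as the shift and the second coordinate $x^*$ as the target value. Since ${_x}S + J$ is surjective, there exists $t \in E$ with $x^* \in ({_x}S + J)(t) = {_x}S(t) + Jt$. Recalling that ${_x}S(t) = S(t+x)$, this means $x^* = s^* + j^*$ for some $s^* \in S(t+x)$ and $j^* \in Jt$. Setting $s := t+x$ gives $(s,s^*) \in G(S)$ together with $t = s-x$ and $j^* = x^* - s^* \in J(s-x)$, which is exactly the relation identified above.

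Finally I would substitute into \eqref{J1}. Applying it to the pair $(s-x,\,j^*)$ yields $\half\|s-x\|^2 + \half\|j^*\|^2 = \bra{s-x}{j^*}$. Since $j^* = x^*-s^*$, we have $\|j^*\| = \|s^*-x^*\|$ and $\bra{s-x}{j^*} = -\bra{s-x}{s^*-x^*}$, whence $\half\|s-x\|^2 + \half\|s^*-x^*\|^2 + \bra{s-x}{s^*-x^*} = 0$. Therefore the infimum in \Def{QDdef} is $\le 0$, and $S$ is quasidense.

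There is no substantive obstacle; the entire content lies in selecting the correct instance of the surjectivity hypothesis, namely shifting $S$ by the first coordinate $x$ and hitting the second coordinate $x^*$, after which the definition of ${_x}S$ and the identity \eqref{J1} do the rest. The one point to keep straight is the sign: surjectivity supplies $j^* = x^* - s^*$ rather than $s^* - x^*$, and it is this orientation that makes the cross term cancel the two half-norms exactly.
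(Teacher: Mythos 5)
Your proposal is correct and follows essentially the same route as the paper's own proof: apply the surjectivity of ${_x}S + J$ to hit $x^*$, set $s := t + x$ so that $(s-x,\,x^*-s^*) \in G(J)$, and use the characterization \eqref{J1} to make the bracketed expression in \Def{QDdef} vanish exactly. The sign bookkeeping you flag at the end is handled identically in the paper.
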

\begin{proof}
Let $(x,x^*) \in E \times E^*$.   Choose $t \in D(_xS)$ such that $(_xS + J)t = x^*$.   So there exists $s^* \in S(t + x)$ such that $(t,x^* - s^*) \in G(J)$.   Thus, writing 
$s := t + x$, $(s,s^*) \in G(S)$ and $(s - x,x^* - s^*) = (t,x^* - s^*) \in G(J)$, that is to say $\half\|s - x\|^2 + \half\|x^* - s^*\|^2 = \bra{s - x}{x^* - s^*}$.   Equivalently,
\par
\centerline{$\half\|s - x\|^2 + \half\|s^* - x^*\|^2 + \bra{s - x}{s^* - x^*} = 0$.}
\noindent
This obviously implies that $S$ is quasidense.
\end{proof}  
We now discuss a significant example of quasidensity.   The following definition was made in \cite[Definition 2.1, p.\ 633]{SW}. 
\begin{definition}\label{WSUBDIFF}
An {\em ubiquitous subdifferential}, $\partial_u$, is a rule that associates with each proper lower semicontinuous function $f\colon\ E\ \to \rbar$ a multifunction $\partial_u f: E \toto E^*$ such that\par
$\bullet$\enspace  $\partial_u f(x) = \emptyset$ if $x \not\in \dom\,f$,\par
$\bullet$\enspace $0\in\partial_u f(x)$ if $f$ attains a strict global minimum at $x$,\par
$\bullet$\enspace $\partial_u (f+h)(x) \subseteq \partial_u f(x) + \partial h(x)$ whenever $x \in \dom\,f$ and $h$ is a continuous convex real function on $E$ (here $\partial h$ is the convex subdifferential of $h$).
\end{definition}
\par
There is a list of abstract subdifferentials that satisfy these conditions in the remarks following \cite[Definition 2.1]{SW}.   Now suppose that $\partial_u$ is an ubiquitous subdifferential.  We have the following result:
\begin{theorem}\label{SWthm}
Let $f:E \to \RR$ be proper and lower semicontinuous.   Let $a_0,b_0,c_0 \in \RR$ with $a_0 < \half$ and,
\begin{equation}\label{SW1}
\all\ x \in E,\ f(x) \ge -a_0\|x\|^2 - b_0\|x\| - c_0.
\end{equation}
Then $\partial_u f$ is quasidense.
\end{theorem}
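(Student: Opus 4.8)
The plan is to fix an arbitrary $(x,x^*) \in E \times E^*$ and to produce, for each small $\eps > 0$, a point $(s,s^*) \in G(\partial_u f)$ at which the quantity
$Q := \half\|s - x\|^2 + \half\|s^* - x^*\|^2 + \bra{s - x}{s^* - x^*}$
defining quasidensity is at most of order $\eps$; letting $\eps \downarrow 0$ then yields the infimum $\le 0$ required by \Def{QDdef}. Morally this mimics the surjectivity argument of \Lem{Jlem}: if the relevant auxiliary function attained its minimum exactly, we would get $x^* - s^* \in J(s - x)$ and hence $Q = 0$ on the nose; the $\eps$-perturbation below is the price paid for working in a possibly nonreflexive space. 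The device is the auxiliary function $g\colon E \to \rbar$ given by $g(s) := f(s) + \half\|s - x\|^2 - \bra{s}{x^*}$. Since the added terms are continuous and convex, $g$ is proper and lower semicontinuous, and the hypothesis \eqref{SW1} with $a_0 < \half$ makes $g$ coercive: one checks $g(s) \ge (\half - a_0)\|s\|^2 - (b_0 + \|x\| + \|x^*\|)\|s\| - c_0 + \half\|x\|^2$, a quadratic in $\|s\|$ with positive leading coefficient, so $g$ is bounded below and all its sublevel sets are bounded.

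Next I would invoke Ekeland's variational principle. Because $g$ is proper, lower semicontinuous and bounded below, for each $\eps \in (0,1]$ there is a point $s = s_\eps$ with $g(s) \le \infn_E g + \eps$ such that $s \mapsto g(s) + \eps\|s - s_\eps\|$ attains a \emph{strict} global minimum at $s_\eps$. Set $h(s) := \half\|s - x\|^2 - \bra{s}{x^*} + \eps\|s - s_\eps\|$, a continuous convex real function; then $f + h$ attains a strict global minimum at $s_\eps$, and finiteness of $g(s_\eps)$ gives $s_\eps \in \dom f$. The defining properties of the ubiquitous subdifferential in \Def{WSUBDIFF} now apply: the strict-minimum property gives $0 \in \partial_u(f + h)(s_\eps)$, and the sum rule gives $0 \in \partial_u f(s_\eps) + \partial h(s_\eps)$. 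By the convex subdifferential calculus, $\partial h(s_\eps) \subseteq J(s_\eps - x) - x^* + \eps\,\bar B_{E^*}$, the last term coming from $\partial(\eps\|\cdot - s_\eps\|)(s_\eps)$. Hence there exist $s^* = s^*_\eps \in \partial_u f(s_\eps)$ and $b^* \in E^*$ with $\|b^*\| \le 1$ such that $x^* - s^* - \eps b^* \in J(s - x)$.

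Finally I would estimate $Q$. Writing $v^* := x^* - s^* - \eps b^* \in J(s - x)$, the characterization \eqref{J1} gives $\half\|s - x\|^2 + \half\|v^*\|^2 = \bra{s - x}{v^*}$ together with $\|v^*\| = \|s - x\|$. Substituting $s^* - x^* = -v^* - \eps b^*$ into $Q$ makes the ``$J$-part'' cancel, and a short estimate using $\|v^*\| = \|s-x\|$ and $\|b^*\| \le 1$ leaves only error terms: one obtains $Q \le 2\eps\|s - x\| + \half\eps^2$. The coercivity from the first paragraph controls the near-minimizers, since $g(s_\eps) \le \infn_E g + 1$ forces $\|s_\eps\| \le M$ for some $M$ independent of $\eps$, whence $\|s_\eps - x\| \le M + \|x\|$ and $Q \le 2\eps(M + \|x\|) + \half\eps^2 \to 0$ as $\eps \downarrow 0$. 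As each $(s_\eps, s^*_\eps)$ lies in $G(\partial_u f)$, the infimum in \Def{QDdef} is $\le 0$, so $\partial_u f$ is quasidense.

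I expect the main obstacle to be the existence of the minimizer. In a general nonreflexive Banach space, coercivity and lower semicontinuity of $g$ do not by themselves produce an exact minimizer, so one cannot simply assert $0 \in \partial_u g$ at a minimum; Ekeland's principle circumvents this by supplying, at the cost of the arbitrarily small convex perturbation $\eps\|\cdot - s_\eps\|$, a genuine strict global minimum of a perturbed function to which the abstract rules of \Def{WSUBDIFF} apply. The remaining care is bookkeeping: keeping the perturbation continuous and convex so that the sum rule is available, and tracking the $\eps$-errors through \eqref{J1} so that they vanish in the limit.
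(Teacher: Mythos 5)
Your proof is correct and takes essentially the same route as the proof the paper cites for this theorem (\cite[Theorem 3.2]{SW}, which the paper describes as based on the ``elementary,'' Br{\o}ndsted--Rockafellar-style proof of \Thm{RTRthm}): minimize the quadratically perturbed function, apply Ekeland's variational principle as the nonconvex stand-in for Br{\o}ndsted--Rockafellar, and then combine the strict-minimum and sum-rule axioms of $\partial_u$ with the duality-map identity \eqref{J1} to estimate the quasidensity expression. In particular, your coercivity bound from $a_0 < \half$ correctly gives the $\eps$-independent bound on the Ekeland points, and the $\eps$-bookkeeping through $J(s-x)$ and the unit-ball term $\eps\,\partial\|\cdot - s_\eps\|$ is sound, so there are no gaps.
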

\begin{proof}
See \cite[Theorem 3.2, pp.\ 634--635]{SW}.   The proof of this is based on the ``elementary'' proof of \Thm{RTRthm}, that is \cite[Theorem 4.6]{PARTTWO}.
\end{proof}
\begin{corollary}\label{SWcor}
Let $f:E\rightarrow\rbar$ be proper, lower semicontinuous and dominate a continuous affine function.  Then $\partial_u f$ is quasidense.
\end{corollary}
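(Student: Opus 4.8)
The plan is to obtain the corollary as an immediate consequence of \Thm{SWthm}, by verifying that domination of a continuous affine function yields the lower bound \eqref{SW1} with the quadratic coefficient taken to be zero. First I would make the hypothesis explicit: to say that $f$ dominates a continuous affine function means that there exist $x^* \in E^*$ and $c \in \RR$ such that $f(x) \ge \bra{x}{x^*} + c$ for all $x \in E$. (This inequality holds trivially, both sides being permitted to equal $+\infty$, at points outside $\dom\,f$, so it is genuinely a statement for all of $E$.)

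Next I would bound the linear term below by means of the dual-norm inequality $\bra{x}{x^*} \ge -\|x\|\,\|x^*\|$, valid for every $x \in E$. Substituting this into the domination gives
\[
\all\ x \in E,\quad f(x) \ge -\|x^*\|\,\|x\| + c,
\]
which is precisely the inequality \eqref{SW1} with the choices $a_0 := 0$, $b_0 := \|x^*\|$ and $c_0 := -c$.

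Finally, since $a_0 = 0 < \half$, all the hypotheses of \Thm{SWthm} are met, and that theorem yields at once that $\partial_u f$ is quasidense. There is no real obstacle here: the entire analytic content resides in \Thm{SWthm}, and the only thing to observe is that an affine function grows at most linearly in $\|x\|$, so the quadratic coefficient $a_0$ can be taken to be $0$, comfortably below the critical threshold $\half$. The sole point meriting a moment's care is the bookkeeping at points where $f$ takes the value $+\infty$, but there the desired inequality is automatic.
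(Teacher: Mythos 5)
Your proof is correct and follows exactly the route the paper intends: the paper's own proof is just ``This is immediate from \Thm{SWthm},'' and your verification --- writing the affine minorant as $f(x) \ge \bra{x}{x^*} + c$, bounding $\bra{x}{x^*} \ge -\|x^*\|\,\|x\|$, and taking $a_0 = 0 < \half$, $b_0 = \|x^*\|$, $c_0 = -c$ in \eqref{SW1} --- is precisely the routine bookkeeping that ``immediate'' suppresses.
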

\begin{proof}
This is immediate from \Thm{SWthm}.
\end{proof}
\begin{example}\label{RCASE}
In this example, we suppose that $E = \RR$ and that $\partial_u$ has the special property that, whenever $f$ is a polynomial, $\partial_uf(x) = \{f^\prime(x)\}$.   For instance, $\partial_u$ could be the Clarke--Rockafellar subdifferential.   Let $f$ be a polynomial.  Then the statement that $\partial_u f$ is quasidense can be rewritten:
\begin{equation*}
\all\ z \in \RR,\ \infn_{s \in \RR}\half(s + f^\prime(s) - z)^2 \le 0.
\end{equation*}
Let $\lambda \in \RR$ and $f(x) := -\lambda x^2$.   So $\partial_u f$ is quasidense if, and only if,
\begin{equation*}
\all\ z \in \RR,\ \infn_{s \in \RR}\half(s - 2\lambda s - z)^2 \le 0.
\end{equation*}
$\bullet$\enspace If $\lambda \ne \half$ then taking $s := z/(1 - 2\lambda)$ shows that $\partial_u f$ is quasidense.
\par\noindent
$\bullet$\enspace If $\lambda = \half$ then taking $z \ne 0$ shows that $\partial_u f$ is not quasidense.
\par
Thus the condition \eqref{SW1} is sufficient but not necessary for the quasidensity of $\partial_u f$.
\par
This example (with different justification) is taken from \cite[Example 3.5, p.\ 636]{SW}.
\end{example}
\section{Monotone multifunctions: basic results}\label{MONsec}
For the rest of this paper, we will discuss the very rich theory of the quasidensity of {\em monotone} multifunctions.
\begin{theorem}[Quasidensity and maximality]\label{RLMAXthm}
Let $S\colon\ E \toto E^*$ be closed, monotone and quasidense.   Then $S$ is maximally monotone. 
\end{theorem}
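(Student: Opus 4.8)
The plan is to use the standard reduction of maximal monotonicity to a statement about points that are monotonically related to the graph. Recall that $(x,x^*) \in E \times E^*$ is \emph{monotonically related} to $G(S)$ if $\bra{s-x}{s^*-x^*} \ge 0$ for every $(s,s^*) \in G(S)$, and that a monotone multifunction $S$ is maximally monotone precisely when every such $(x,x^*)$ already lies in $G(S)$ (adjoining a point preserves monotonicity exactly when that point is monotonically related to the whole graph). Since $S$ is monotone by hypothesis, it suffices to fix an arbitrary $(x,x^*)$ monotonically related to $G(S)$ and prove that $(x,x^*) \in G(S)$.

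First I would feed this particular $(x,x^*)$ into the quasidensity hypothesis of \Def{QDdef}, which supplies
$$\infn_{(s,s^*) \in G(S)}\big[\half\|s - x\|^2 + \half\|s^* - x^*\|^2 + \bra{s - x}{s^* - x^*}\big] \le 0.$$
Next I would exploit monotone relatedness: the cross term $\bra{s-x}{s^*-x^*}$ is nonnegative for every $(s,s^*) \in G(S)$, so each bracketed quantity is bounded below by $\half\|s-x\|^2 + \half\|s^*-x^*\|^2 \ge 0$. Together with the displayed inequality this pins the infimum at exactly $0$ and, crucially, yields a minimizing sequence $(s_n,s_n^*) \in G(S)$ along which both the nonnegative cross term and the nonnegative sum of squared norms tend to $0$.

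From $\half\|s_n-x\|^2 + \half\|s_n^*-x^*\|^2 \to 0$ I would deduce $s_n \to x$ in $E$ and $s_n^* \to x^*$ in $E^*$, so $(s_n,s_n^*) \to (x,x^*)$ in $E \times E^*$. Finally, since $S$ is closed, i.e.\ $G(S)$ is a closed subset of $E \times E^*$, the limit $(x,x^*)$ belongs to $G(S)$, which is what we wanted.

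The argument is short, and the one step that genuinely requires care is the passage from the quasidensity inequality—which controls only an \emph{infimum}, not an attained minimum—to a bona fide convergent sequence. Here monotone relatedness does the essential work: by Young's inequality the bracketed expression is nonnegative for \emph{every} $(x,x^*)$, so quasidensity alone merely says a minimizing sequence drives the full expression to $0$, and such a sequence could in principle escape to infinity with a compensating negative cross term. It is exactly the sign of the cross term, guaranteed by monotone relatedness, that forces the sum of squared norms (and not just the full expression) to vanish, after which closedness of $G(S)$ delivers the conclusion.
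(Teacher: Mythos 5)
Your proof is correct, and it is essentially the argument behind the result: the survey itself gives no in-text proof (it cites \cite[Theorem 3.2]{PARTTWO} and \cite{PARTONE}), and the cited proof proceeds exactly as you do --- fix $(x,x^*)$ monotonically related to $G(S)$, note that the nonnegative cross term forces the quasidensity infimum to pick out a minimizing sequence with $\half\|s_n-x\|^2+\half\|s_n^*-x^*\|^2\to 0$, and conclude via closedness of $G(S)$. Your closing remark correctly identifies the one delicate point, namely that without monotone relatedness a minimizing sequence could escape to infinity with a compensating negative cross term, so the sign condition is what makes the norms, and not merely the full bracket, vanish.
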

\begin{proof}
See \cite[Theorem 3.2]{PARTTWO}, \cite[Theorem 7.4(a), pp.\ 1032--1033]{PARTONE} or \cite[Lemma 4.7, p.\ 1027]{PARTONE}.    
\end{proof}
\begin{example}[The tail operator]\label{TAILex}
Let $E = \ell_1$, and define the linear map $T\colon\ \ell_1 \mapsto \ell_\infty = E^*$ by $(Tx)_n = \sum_{k \ge n} x_k$.   $T$ is maximally monotone but not quasidense.   See \cite[Example 7.10, pp.\ 1034--1035]{PARTONE}. 
\end{example}
\par
\Thm{RTRthm} below is a very important result.   By virtue of \Thm{RLMAXthm}, it generalizes Rockafellar's result, \cite{RTRMMT}, that subdifferentials of proper, convex, lower semicontinuous functions are maximally monotone.   The first proof of this result mentioned below was the source of \Thm{SWthm}.
\begin{theorem}\label{RTRthm}
Let $f \in \PCLSC(E)$.   Then  $\partial f$ is closed, monotone and\break quasidense.
\end{theorem}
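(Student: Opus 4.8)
The plan is to dispatch the three properties separately, with quasidensity being the substantive one. \emph{Monotonicity} and \emph{closedness} are the classical facts. For monotonicity, if $s^*\in\partial f(s)$ and $t^*\in\partial f(t)$ then adding the two subgradient inequalities $f(t)\ge f(s)+\bra{t-s}{s^*}$ and $f(s)\ge f(t)+\bra{s-t}{t^*}$ yields $\bra{s-t}{s^*-t^*}\ge0$. For closedness, the Fenchel--Young inequality gives $f(s)+f^*(s^*)-\bra{s}{s^*}\ge0$ for all $(s,s^*)$, and $G(\partial f)$ is exactly the set where this lower semicontinuous function vanishes; being a sublevel set of a lower semicontinuous function, it is closed.

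For \emph{quasidensity}, fix $(x,x^*)\in E\times E^*$. Since $f\in\PCLSC(E)$, its conjugate $f^*$ is proper, so $f$ dominates a continuous affine function (as recorded in \Cor{SWcor}); consequently the function
\[
\phi(s):=f(s)+\half\|s-x\|^2-\bra{s}{x^*}
\]
is proper, convex, lower semicontinuous and \emph{coercive}, hence bounded below with bounded sublevel sets. The goal is to produce, for each $\eps>0$, a pair $(s,s^*)\in G(\partial f)$ making the quasidensity bracket at most $O(\sqrt\eps)$; letting $\eps\downarrow0$ then forces the required infimum to be $\le0$.

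The device is Ekeland's variational principle. Applied to $\phi$ with step $\sqrt\eps$, it yields a point $s_\eps$ with $\phi(s_\eps)\le\inf\phi+\eps$ that minimises $w\mapsto\phi(w)+\sqrt\eps\,\|w-s_\eps\|$. The Moreau--Rockafellar sum rule (legitimate because $\half\|\cdot-x\|^2$, $-\bra{\cdot}{x^*}$ and $\sqrt\eps\|\cdot-s_\eps\|$ are continuous and convex) gives $0\in\partial f(s_\eps)+J(s_\eps-x)-x^*+\sqrt\eps\,B$, where $B$ is the closed unit ball of $E^*$ and $J=\partial(\half\|\cdot\|^2)$. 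Thus there exist $s^*\in\partial f(s_\eps)$, $j^*\in J(s_\eps-x)$ and $b^*\in E^*$ with $\|b^*\|\le1$ and $x^*=s^*+j^*+\sqrt\eps\,b^*$. Writing $s:=s_\eps$ and $a:=s-x$, so that $(s,s^*)\in G(\partial f)$ and $s^*-x^*=-(j^*+\sqrt\eps\,b^*)$, I substitute into the bracket and use the defining identity $\bra{a}{j^*}=\half\|a\|^2+\half\|j^*\|^2$ together with $\|j^*\|=\|a\|$, both from \eqref{J1}. A short estimate then collapses the bracket to at most $2\|a\|\sqrt\eps+\half\eps$.

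The last ingredient---and the point where non-reflexivity would otherwise bite---is that $\|a\|=\|s_\eps-x\|$ stays bounded. In a general Banach space $\partial f+J$ need not be surjective, so \Lem{Jlem} does not apply directly; but quasidensity asks only for an \emph{approximate} solution of $x^*\in\partial f(s)+J(s-x)$, which Ekeland supplies with no compactness whatsoever. Since $\phi(s_\eps)\le\inf\phi+1$ for $\eps\le1$ and $\phi$ has bounded sublevel sets, $\sup_{0<\eps\le1}\|s_\eps-x\|=:M<\infty$, whence the bracket is at most $2M\sqrt\eps+\half\eps\to0$. I expect the main obstacle to be exactly this bookkeeping: arranging the Ekeland perturbation and the sum rule so that the extra term $\sqrt\eps\,b^*$ enters the bracket as a controlled error, and verifying that coercivity of $\phi$ confines the approximate minimisers to a single bounded set independent of $\eps$.
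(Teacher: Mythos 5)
Your proof is correct and takes essentially the same route as the ``elementary'' proof the paper cites (\cite[Theorem 4.6]{PARTTWO}): regularize with $s \mapsto f(s) + \half\|s-x\|^2 - \bra{s}{x^*}$, use an affine minorant of $f$ to get coercivity and hence a uniformly bounded set of approximate minimizers, upgrade an approximate minimizer to an exact point of $G(\partial f)$ with a small residual, and split the subdifferential with the Moreau--Rockafellar sum rule before estimating the quasidensity bracket. Your only substitution is Ekeland's variational principle where the cited proof uses the Br{\o}ndsted--Rockafellar theorem \cite{BRON}; for proper convex lower semicontinuous functions these tools are interchangeable (each is classically derived from the other together with the same sum rule), so the resulting $O(\sqrt\eps)$ bookkeeping is the same.
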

\begin{proof}
The more elementary proof of this result (see \cite[Theorem 4.6]{PARTTWO}) uses the Br{\o}ndsted--Rockafellar theorem \cite{BRON} and  Rockafellar's formula \cite{FENCHEL} for the subdifferential of a sum.   There is a slicker but more sophisticated proof using the properties of {\em Fitzpatrick functions} (see below) in \cite[Theorem 7.5, p.\ 1033]{PARTONE}. 
\end{proof}
As we noted in the introduction, it is apparently not known whether the result corresponding to \Thm{STDthm} with ``closed, monotone and quasidense'' replaced by ``maximally monotone'' is true.
\begin{theorem}[Sum theorem with domain constraints]\label{STDthm}
Let $S,T\colon\ E \toto E^*$ be closed, monotone and quasidense and $D(S) \cap \intr\,D(T) \ne \emptyset$.   Then $S + T$ is closed, monotone and quasidense.
\end{theorem}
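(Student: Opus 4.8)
The plan is to reduce all three conclusions to a single surjectivity statement and then read them off. \textbf{Monotonicity} of $S+T$ is immediate: given $(x,x^*),(y,y^*)\in G(S+T)$ we split $x^*=s^*+t^*$ and $y^*=u^*+v^*$ with $s^*\in Sx$, $u^*\in Sy$, $t^*\in Tx$, $v^*\in Ty$, and then $\bra{x-y}{x^*-y^*}=\bra{x-y}{s^*-u^*}+\bra{x-y}{t^*-v^*}\ge0$ by the monotonicity of $S$ and of $T$. The substantive content is \textbf{quasidensity}, and I would recover \textbf{closedness} as a by-product. The key claim I would isolate is that
\[
\hbox{for every }x\in E,\qquad {}_x(S+T)+J={}_xS+{}_xT+J\ \hbox{ is surjective onto }E^*.
\]

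Granting this claim, \Lem{Jlem} applied to $S+T$ gives quasidensity at once. Closedness then follows from a short duality-map argument that in fact yields maximal monotonicity directly. If $(x_0,x_0^*)$ is monotonically related to $G(S+T)$, apply the claim to ${}_{x_0}(S+T)$ to obtain $t\in E$, $m^*\in (S+T)(t+x_0)$ and $j^*\in Jt$ with $x_0^*=m^*+j^*$; monotone-relatedness of $(x_0,x_0^*)$ to $(t+x_0,m^*)$ forces $\bra{t}{j^*}\le0$, whereas \eqref{J1} gives $\bra{t}{j^*}=\|t\|^2=\|j^*\|^2$, so $t=0$, $j^*=0$ and $(x_0,x_0^*)\in G(S+T)$. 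Hence $S+T$ is maximally monotone and, in particular, closed. Thus the whole theorem rests on the displayed surjectivity claim, and it is crucial that the claim provides \emph{attainment}, not merely that the infimum in \Def{QDdef} equals $0$.

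To prove the claim, fix $x$. Translating by $x$ preserves closedness, monotonicity and quasidensity of the summands, and preserves the constraint qualification, since $D({}_xS)\cap\intr\,D({}_xT)=\big(D(S)\cap\intr\,D(T)\big)-x\ne\emptyset$; so it suffices to show $S+T+J$ is surjective. Given $w^*\in E^*$, solving $w^*\in Ss+Ts+Js$ is the heart of the matter. Here I would pass to the Fitzpatrick functions $\varphi_S,\varphi_T$ (the representative convex lower semicontinuous functions on $E\times E^*$ whose equality sets recover $G(S)$ and $G(T)$ and whose conjugate behaviour encodes the quasidensity of $S$ and of $T$), together with the representative $\half\|\cdot\|^2$ of $J$. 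I would assemble these into a single convex minimization problem on $E\times E^*$ whose Fenchel--Young equality conditions at a minimizer return a point $s$ and a splitting $s^*=u^*+v^*$ with $u^*\in Ss$, $v^*\in Ts$ and $w^*-s^*\in Js$. The role of the hypothesis $D(S)\cap\intr\,D(T)\ne\emptyset$ is precisely to supply the constraint qualification for a Fenchel--Rockafellar / Attouch--Br\'ezis duality theorem, guaranteeing that the relevant infimum is attained and that the inf-convolution of the two representatives is exact.

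The main obstacle is this last analytic step: checking that the interior-point condition genuinely furnishes the qualification hypothesis for the duality theorem, and that the equality cases at the resulting minimizer can be \emph{disaggregated} into the three separate inclusions for $S$, $T$ and $J$ simultaneously, rather than only for the sum. Everything else is cheap once attainment is in hand: monotonicity is elementary, quasidensity is a one-line application of \Lem{Jlem}, and closedness (indeed maximality) comes from the duality-map computation above. In short, the entire difficulty is concentrated in the exact-duality argument that produces a genuine solution of $w^*\in Ss+Ts+Js$.
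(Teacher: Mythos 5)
Your reduction of the whole theorem to the claim that ${}_x(S+T)+J$ is surjective for every $x\in E$ contains a fatal gap: that claim is strictly stronger than quasidensity and is \emph{false} under the hypotheses of the theorem. Take $E$ nonreflexive and $S=T=0$ (the zero multifunction with full domain). Both are closed, monotone and quasidense --- $0=\partial f$ for $f\equiv 0$, so \Thm{RTRthm} applies, and \Ex{NRex} even gives a direct verification --- and $D(S)\cap\intr\,D(T)=E\ne\emptyset$. Yet ${}_x(S+T)+J=J$ for every $x$, and by James's theorem $J$ is not surjective in any nonreflexive space; this is precisely the content of \Sec{NRsec}, where the paper notes that the converse of \Lem{Jlem} ``fails in the most spectacular way.'' So the conclusion of the theorem holds while your key claim fails: \Lem{Jlem} is a one-way implication, and quasidensity was deliberately formulated in \Def{QDdef} as an inequality for an infimum that need \emph{not} be attained. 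Your emphasized requirement that the argument must provide ``attainment, not merely that the infimum equals $0$'' is exactly the point at which the plan breaks: in the Fenchel--Rockafellar/Attouch--Br\'ezis step, dual attainment takes place in $(E\times E^*)^*=E^*\times E\dbs$, so the minimizer produced by the duality theorem has a bidual component, and it can be pulled back to a genuine solution $s\in E$ of $w^*\in Ss+Ts+Js$ only when $E$ is reflexive. Consequently your derivations of closedness and maximality, which both feed off the surjectivity claim, are also unproven; note that closedness of $S+T$ is not automatic from closedness of $S$ and $T$.

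In a reflexive space your outline is essentially the classical Rockafellar-style argument and would work, since there \Thm{Rthm} makes quasidensity equivalent to the surjectivity of ${}_xS+J$; but the theorem is stated for an arbitrary nonzero Banach space $E$. The paper's proof (a special case of \cite[Theorem 8.4]{PARTONE}) runs the same kind of Attouch--Br\'ezis duality on the Fitzpatrick functions of $S$ and $T$ --- the constraint qualification $D(S)\cap\intr\,D(T)\ne\emptyset$ playing exactly the role you assign it --- but uses the dual attainment in $E^*\times E\dbs$ only to verify the inequality of \Def{QDdef} for $S+T$ directly, never manufacturing an exact solution in $E$; maximal monotonicity and closedness then follow from the quasidensity machinery (cf.\ \Thm{RLMAXthm}) rather than from a Minty-type surjectivity computation. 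The fix for your proposal is therefore structural, not technical: replace the surjectivity claim by the unattained infimum condition itself as the target of the duality argument.
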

\begin{proof}
This is a special case of \cite[Theorem 8.4(a)$\lr$(d), pp.\ 1036--1037]{PARTONE}.
\end{proof}
There is a ``dual'' version of \Thm{STDthm} that we will state in \Thm{STRthm}.   Before discussing this, we introduce the {\em Fitzpatrick function}, $\varphi_S\colon\ E \times E^* \to \rbar$, and the {\em Fitzpatrick extension}, $S^\F\colon\ E^* \toto E\dbs$, of a closed, monotone, quasidense multifunction $S\colon\ E \toto E^*$.   The function $\varphi_S$ is defined by
\begin{equation*}
\varphi_S(x,x^*) := \supn_{(s,s^*) \in G(S)}\big[\bra{s}{x^*} + \bra{x}{s^*} - \bra{s}{s^*}\big].
\end{equation*}
See \cite{FITZ}, \cite[Definition 3.4]{PARTTWO} and many other places.   The multifunction $S^\F$ was defined in \cite[Definition 5.1]{PARTTWO} by
\begin{equation*}
(y^*,y\dbs) \in G(S^\F) \hbox{ exactly when } {\varphi_S}^*(y^*,y\dbs) = \bra{y^*}{y\dbs}.
\end{equation*}
(There is a more abstract version of this in  \cite[Definition 8.5, p.\ 1037]{PARTONE}.)   The word {\em extension} is justified by the easily verifiable fact that
\begin{equation*}
(x,x^*) \in G(S) \iff (x^*,\wh x) \in G(S^\F).
\end{equation*}
It was shown in \cite[Section 11]{PARTTWO} that $(y^*,y\dbs) \in G(S^\F)$ exactly when $(y\dbs,y^*)$ is in the {\em Gossez extension} of $G(S)$ \big(see \cite[Lemma~2.1, p.\ 275]{GOSSEZ}\big).
\begin{theorem}[Sum theorem with range constraints]\label{STRthm}
Let $S,T\colon\ E \toto E^*$ be closed, monotone and quasidense and $R(S) \cap \intr\,R(T) \ne \emptyset$. Then the multifunction $y \mapsto (S^\F + T^\F)^{-1}(\wh y)$ is closed, monotone and quasidense.
Under certain additional technical conditions, the {\em parallel sum} $(S^{-1} + T^{-1})^{-1}$ is closed, monotone and  quasidense.
\end{theorem}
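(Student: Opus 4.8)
The plan is to dualize the problem through the Fitzpatrick extensions and then apply the domain-constraint sum theorem, \Thm{STDthm}, in the bidual setting. Since $S$ and $T$ are closed, monotone and quasidense, so are their Fitzpatrick extensions $S^\F,T^\F\colon E^* \toto E\dbs$, now regarded as multifunctions from the Banach space $E^*$ into its dual $E\dbs$; this is one of the basic properties of the extension established in \cite{PARTTWO}. The first task is to convert the range constraint into a domain constraint for these extensions. From the extension identity $(x,x^*) \in G(S) \iff (x^*,\wh x) \in G(S^\F)$ one reads off $R(S) \subseteq D(S^\F)$ and, likewise, $R(T) \subseteq D(T^\F)$. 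Since the interior is monotone under inclusion, $\intr R(T) \subseteq \intr D(T^\F)$, so any point of $R(S) \cap \intr R(T)$ lies in $D(S^\F) \cap \intr D(T^\F)$; the hypothesis $R(S) \cap \intr R(T) \ne \emptyset$ therefore yields $D(S^\F) \cap \intr D(T^\F) \ne \emptyset$.

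With the constraint qualification in hand, I would apply \Thm{STDthm}, read with $E$ replaced by $E^*$ (whose dual is $E\dbs$), to conclude that $U := S^\F + T^\F\colon E^* \toto E\dbs$ is closed, monotone and quasidense. It then remains to transfer these three properties to the multifunction $V\colon E \toto E^*$, $V(y) := U^{-1}(\wh y)$, whose graph is $G(V) = \big\{(y,y^*)\colon (y^*,\wh y) \in G(U)\big\}$. Monotonicity and closedness of $V$ are routine: the canonical-embedding identity $\bra{y_1 - y_2}{y_1^* - y_2^*} = \bra{y_1^* - y_2^*}{\wh{y_1} - \wh{y_2}}$ turns the monotonicity of $U$ into that of $V$, and the isometry and continuity of $y \mapsto \wh y$, together with the closedness of $G(U)$, give the closedness of $G(V)$.

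The quasidensity of $V$ is the crux, and I expect it to be the main obstacle. Observe first that, by Young's inequality, the bracket in \Def{QDdef} is always $\ge 0$, so quasidensity of $V$ amounts to showing that, for each $(x,x^*) \in E \times E^*$,
$$\infn_{(y^*,\wh y) \in G(U)}\big[\half\|\wh y - \wh x\|^2 + \half\|y^* - x^*\|^2 + \bra{y^* - x^*}{\wh y - \wh x}\big] = 0.$$
Applying the quasidensity of $U$ at the point $(x^*,\wh x) \in E^* \times E\dbs$ produces exactly this infimum, but taken over \emph{all} of $G(U)$ rather than over the ``slice'' $G(U) \cap (E^* \times \wh E)$ that defines $G(V)$. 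Since passing to a subset only increases an infimum, the inequality runs the wrong way: this is precisely the difficulty. One must show that the quasidensity infimum of $U$ at a target whose second coordinate $\wh x$ lies in $\wh E$ is still approached by points of $G(U)$ whose second coordinate lies in $\wh E$. I would close this gap using the special structure of $U$ as a sum of Fitzpatrick extensions—via the defining relation ${\varphi_U}^*(y^*,y\dbs) = \bra{y^*}{y\dbs}$ on $G(U)$ and the extension identity that ties the ``genuine'' part of each $G(S^\F)$ to $\wh E$—or, equivalently, by invoking the corresponding lemma of \cite{PARTTWO}. A parallel route is the surjectivity criterion of \Lem{Jlem}: for every $y_0 \in E$ and $x^* \in E^*$ one seeks $t \in E$ and $y^* \in E^*$ with $\wh{t + y_0} \in U(y^*)$ and $x^* - y^* \in Jt$; producing such a solution is the heart of the matter, the difficulty again being that the value $\wh{t + y_0}$ is forced to lie in the canonical image $\wh E$.

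Finally, for the parallel sum, decomposing $\wh y = \wh a + \wh b$ with $a \in S^{-1}(y^*)$ and $b \in T^{-1}(y^*)$ shows immediately that $G\big((S^{-1} + T^{-1})^{-1}\big) \subseteq G(V)$, so the parallel sum is always a submultifunction of $V$. The ``additional technical conditions'' are exactly what is needed for the reverse inclusion—namely, that every bidual decomposition $\wh y = a\dbs + b\dbs$ with $a\dbs \in S^\F(y^*)$ and $b\dbs \in T^\F(y^*)$ descend to $\wh E$, so that $a\dbs = \wh a$ and $b\dbs = \wh b$. Under such conditions $V = (S^{-1} + T^{-1})^{-1}$, and the parallel sum inherits closedness, monotonicity and quasidensity from the first part.
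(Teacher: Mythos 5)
There is a genuine gap, and it sits at the very first step of your plan. You assert that, since $S$ and $T$ are closed, monotone and quasidense, their Fitzpatrick extensions $S^\F, T^\F\colon E^* \toto E\dbs$ are again closed, monotone and quasidense, calling this ``one of the basic properties of the extension established in \cite{PARTTWO}.'' It is not: whether $S^\F$ is quasidense is precisely the open question recorded as \Prob{AFMAXprob} in this paper. What is actually known is weaker: $S^\F$ is maximally monotone (\Thm{AFMAXthm}, itself a nontrivial result), and its quasidensity is known only in the special case $S = \partial f$, via \Thm{FITZDIFFthm} and \Thm{RTRthm} (see \Rem{FGrem}). Without quasidensity of $S^\F$ and $T^\F$ you cannot invoke \Thm{STDthm} with $E$ replaced by $E^*$, and maximal monotonicity alone does not rescue you, since the sum theorem for merely maximally monotone multifunctions under the Rockafellar constraint qualification is the open sum problem. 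So the dualization strategy, as written, assumes an unproved (and possibly false) statement stronger than anything available; indeed, were your route completable, it would essentially resolve \Prob{AFMAXprob} along the way.

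The second gap you identify yourself but do not close: even granting that $U := S^\F + T^\F$ is quasidense on $E^*$, the quasidensity of $V(y) := U^{-1}(\wh y)$ requires the infimum in \Def{QDdef} to be approached by points of $G(U)$ whose second coordinate lies in the canonical image $\wh E$, and, as you note, restricting to this slice only increases the infimum. Your suggestions for bridging this (the defining relation for ${\varphi_U}^*$, or the surjectivity scheme of \Lem{Jlem}) remain programmatic. The paper sidesteps both obstacles entirely: \Thm{STRthm} is obtained as a special case of \cite[Theorem 8.8, p.\ 1039]{PARTONE}, whose proof works directly with representative (Fitzpatrick-type) functions and a partial inf-convolution in the range variable, dual to the construction behind \Thm{STDthm}, rather than by pushing the domain-constraint sum theorem through the Fitzpatrick extensions into the bidual. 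Your preparatory observations are sound --- the transfer $R(S) \subseteq D(S^\F)$ from the extension identity, hence $D(S^\F) \cap \intr\,D(T^\F) \ne \emptyset$; the closedness and monotonicity of $V$; and the inclusion $G\big((S^{-1}+T^{-1})^{-1}\big) \subseteq G(V)$ for the parallel sum --- but the two central steps are unproved, so the proposal is a plausible program, not a proof.
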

\begin{proof}
This is a special case of \cite[Theorem 8.8, p.\ 1039]{PARTONE}.
\end{proof}
If $S\colon\ E \toto E^*$ is closed, monotone and quasidense then it is easily seen that $S^\F$ is monotone.   In fact, we have the following stronger nontrivial result: 
\begin{theorem}\label{AFMAXthm}
Let $S\colon\ E \toto E^*$ be closed, monotone and quasidense.   Then $S^\F\colon E^* \toto E\dbs$ is maximally monotone.
\end{theorem}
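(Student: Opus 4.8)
The plan is to reduce the assertion to \Thm{RLMAXthm}, applied not in $E$ but in the Banach space $E^*$ (whose dual is $E\dbs$): once $S^\F$ is known to be closed, monotone and quasidense as a multifunction $E^*\toto E\dbs$, that theorem immediately yields its maximal monotonicity. Since $S$ is closed, monotone and quasidense, \Thm{RLMAXthm} already shows $S$ is maximally monotone, so its Fitzpatrick function $\varphi_S$ is convex and lower semicontinuous, satisfies $\varphi_S(x,x^*)\ge\bra{x}{x^*}$ everywhere, with equality exactly on $G(S)$. The first substantive step is to recast the quasidensity of $S$ as the conjugate inequality
\[
\varphi_S^*(y^*,y\dbs)\ge\bra{y^*}{y\dbs}\qquad\all\ (y^*,y\dbs)\in E^*\times E\dbs,
\]
which is the form of \Def{QDdef} adapted to the dual pairing; this inequality is what makes $S^\F$ well behaved.

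Granting $\varphi_S^*\ge\bra\cdot\cdot$, the closedness and monotonicity of $S^\F$ are then formal. Indeed $G(S^\F)=\big\{(y^*,y\dbs):\varphi_S^*(y^*,y\dbs)-\bra{y^*}{y\dbs}\le0\big\}$ is a sublevel set of the (weak${}^*$-, hence norm-) lower-semicontinuous function $\varphi_S^*-\bra\cdot\cdot$, so it is closed. For monotonicity, at any $(y^*,y\dbs)\in G(S^\F)$ the nonnegative convex function $\varphi_S^*-\bra\cdot\cdot$ attains its global minimum $0$, so the derivative $(y\dbs,\wh{y^*})$ of the bilinear term lies in $\partial\varphi_S^*(y^*,y\dbs)$; monotonicity of the subdifferential of the convex function $\varphi_S^*$, together with the identity $\bra{y\dbs}{\wh{y^*}}=\bra{y^*}{y\dbs}$, then gives $\bra{y_1^*-y_2^*}{y_1\dbs-y_2\dbs}\ge0$ for any two points of $G(S^\F)$.

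The crux is the quasidensity of $S^\F$, and here I would invoke \Lem{Jlem} in $E^*$: it suffices to show that for every $y^*\in E^*$ the shifted sum ${}_{y^*}S^\F+J_{E^*}$ is surjective onto $E\dbs$, where $J_{E^*}\colon E^*\toto E\dbs$ is the duality map of $E^*$. Unwinding this, for given $(y^*,w\dbs)$ I must produce $(s^*,s\dbs)\in G(S^\F)$ with $w\dbs-s\dbs\in J_{E^*}(s^*-y^*)$, which by \eqref{J1} in $E^*$ says exactly that $\half\|s^*-y^*\|^2+\half\|w\dbs-s\dbs\|^2=\bra{s^*-y^*}{w\dbs-s\dbs}$. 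I would obtain such a pair as a minimiser of the convex function
\[
H(s^*,s\dbs):=\varphi_S^*(s^*,s\dbs)+\half\|s^*-y^*\|^2+\half\|w\dbs-s\dbs\|^2-\bra{s^*}{w\dbs}-\bra{y^*}{s\dbs}+\bra{y^*}{w\dbs}
\]
on $E^*\times E\dbs$. The point of this choice is the decomposition $H=\big(\varphi_S^*(s^*,s\dbs)-\bra{s^*}{s\dbs}\big)+\big(\half\|s^*-y^*\|^2+\half\|w\dbs-s\dbs\|^2-\bra{s^*-y^*}{w\dbs-s\dbs}\big)$ into two nonnegative terms, the first vanishing exactly on $G(S^\F)$ and the second exactly when $w\dbs-s\dbs\in J_{E^*}(s^*-y^*)$. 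Thus $H\ge0$, any point with $H=0$ is the required pair, and everything reduces to showing $\inf H=0$ with the value attained inside $G(S^\F)$.

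I expect this last step to be the main obstacle, and it is genuinely a nonreflexive phenomenon. A Fenchel-duality computation rewrites $\inf H$ through the biconjugate $(\varphi_S^*)^*$ over the bidual $E\dbs\times E\trs$; on the canonical image of $E\times E^*$ this biconjugate returns $\varphi_S$ and the bound $\le0$ is precisely the quasidensity of $S$, but since $\wh E$ need not be norm-dense in $E\dbs$ one cannot confine the dual variable to that image, and the linear functional $s^*\mapsto\bra{s^*}{w\dbs}$ fails to be weak${}^*$-continuous when $w\dbs\notin\wh E$, which also obstructs a naive weak${}^*$-compactness argument for attainment. Controlling $(\varphi_S^*)^*$ off $\wh{(E\times E^*)}$, and thereby establishing both $\inf H=0$ and attainment within $G(S^\F)$, is exactly the content that the Fitzpatrick-extension and SSD-space machinery of \cite{PARTONE} is designed to supply; with it in hand, \Lem{Jlem} yields the quasidensity of $S^\F$ and \Thm{RLMAXthm}, applied in $E^*$, completes the proof.
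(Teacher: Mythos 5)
Your preliminary steps are sound: the reformulation of quasidensity of $S$ as $\varphi_S^*\ge\bra\cdot\cdot$ on $E^*\times E\dbs$ is correct, and from it the closedness of $G(S^\F)$ (a zero-level set of the lower semicontinuous nonnegative function $\varphi_S^*-\bra\cdot\cdot$) and the monotonicity of $S^\F$ (via $\partial\varphi_S^*$ at points where the minimum $0$ is attained) follow exactly as you argue; indeed the paper remarks that monotonicity of $S^\F$ ``is easily seen''. The fatal problem is your overall strategy. You propose to obtain maximal monotonicity of $S^\F$ from \Thm{RLMAXthm} applied in $E^*$, which requires first proving that $S^\F$ is \emph{quasidense} as a multifunction $E^*\toto E\dbs$. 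But that is precisely \Prob{AFMAXprob} of this paper --- stated as an open question, with a positive answer known only in the special case $S=\partial f$ (\Rem{FGrem}, via \Thm{FITZDIFFthm}). Your closing claim that this step ``is exactly the content that the Fitzpatrick-extension and SSD-space machinery of \cite{PARTONE} is designed to supply'' is false: if that machinery supplied it, the problem would not be open. The logical order in the literature is the reverse of yours: \cite[Lemma 12.5, p.\ 1047]{PARTONE} (the proof the paper cites) derives the maximal monotonicity of $S^\F$ \emph{directly} from the quasidensity of $S$, and whether the stronger conclusion of quasidensity of $S^\F$ holds remains unknown.

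Your own diagnosis of where the minimization of $H$ breaks down is accurate --- when $w\dbs\notin\wh E$ the linear term is not weak${}^*$-continuous, so neither $\inf H=0$ nor attainment can be extracted from $\varphi_S$ and the quasidensity of $S$ by biconjugation --- but the correct inference is that this route is blocked, not merely deferred to a reference. A workable direct argument runs at the level of the function $\varphi_S^*$ itself on $E^*\times E\dbs$: one shows that any $(w^*,w\dbs)$ monotonically related to all of $G(S^\F)$ must satisfy $\varphi_S^*(w^*,w\dbs)=\bra{w^*}{w\dbs}$, using the quasidensity of $S$ (not of $S^\F$) in the abstract framework of \cite{PARTONE}; alternatively, since $G(S^\F)$ is the transpose of the Gossez extension of $G(S)$ (\cite[Section 11]{PARTTWO}) and quasidense operators are of type (FP) (\Thm{FPthm}), hence of Gossez's dense type, the maximal monotonicity of the Gossez extension for such operators yields the result. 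As written, your proof establishes the theorem only modulo an open problem, so it is not a proof.
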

\begin{proof}
See \cite[Lemma 12.5, p.\ 1047]{PARTONE}.   There is also a sketch of a proof in \cite[Section 11]{PARTTWO}.
\end{proof}
This leads to the following problem:
\begin{problem}\label{AFMAXprob}
Let $S\colon\ E \toto E^*$ be closed, monotone and quasidense.   Then is $S^\F\colon E^* \toto E\dbs$ necessarily quasidense?
\end{problem}   
\begin{theorem}\label{FITZDIFFthm}
Let $f \in \PCLSC(E)$.   Then $(\partial f)^\F = \partial(f^*)$.
\end{theorem}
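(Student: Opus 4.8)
The plan is to establish the stated identity $(\partial f)^\F = \partial(f^*)$ by unwinding both sides via the defining condition for membership in a Fitzpatrick extension, and then reconciling them through the Fenchel--Young machinery for $f$, $f^*$, and $f\dbs$. First I would compute the Fitzpatrick function $\varphi_{\partial f}$ of the subdifferential. The key observation is that, for $(s,s^*) \in G(\partial f)$, we have $\bra{s}{s^*} = f(s) + f^*(s^*)$ by the very definition of $\partial f$ in \Sec{BANsec}. Substituting this into the formula for $\varphi_{\partial f}(x,x^*) = \supn_{(s,s^*)\in G(\partial f)}\big[\bra{s}{x^*} + \bra{x}{s^*} - \bra{s}{s^*}\big]$ and using the classical Fitzpatrick inequality for subdifferentials, I expect to obtain the clean expression $\varphi_{\partial f}(x,x^*) = f(x) + f^*(x^*)$ on $G(\partial f)$-relevant regions, with the sharp majorization $\varphi_{\partial f} \le f \oplus f^*$ holding generally and equality on $G(\partial f)$. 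This is the standard computation identifying the Fitzpatrick function of a subdifferential with the separable sum $f \oplus f^*$ (i.e.\ $(x,x^*)\mapsto f(x)+f^*(x^*)$), and it is where most of the work concentrates.

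Next I would take the Fenchel conjugate of $\varphi_{\partial f} = f \oplus f^*$ as a function on $E \times E^*$, whose dual is $E^* \times E\dbs$ under the pairing fixed in \Sec{BANsec}. Since the conjugate of a separable sum is the separable sum of the conjugates, I expect $({\varphi_{\partial f}})^*(y^*,y\dbs) = f^*(y^*) + (f^*)\dbs(y\dbs)$, where $(f^*)\dbs = f\dbs$ computed via the biconjugation recipe $f\dbs(x\dbs) = \sup_{X^*}[x\dbs - f^*]$ given in \Sec{BANsec}. Then, by the definition of $S^\F$, we have $(y^*,y\dbs) \in G((\partial f)^\F)$ exactly when $({\varphi_{\partial f}})^*(y^*,y\dbs) = \bra{y^*}{y\dbs}$, i.e.\ exactly when $f^*(y^*) + f\dbs(y\dbs) = \bra{y^*}{y\dbs}$. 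This last equation is precisely the Fenchel--Young equality characterizing $y\dbs \in \partial(f^*)(y^*)$, since the subdifferential of $f^*$ (as a function on $E^*$, with values in $E\dbs$) is governed by $f^*(y^*) + (f^*)^*(y\dbs) = \bra{y^*}{y\dbs}$ and $(f^*)^* = f\dbs$. Hence $G((\partial f)^\F) = G(\partial(f^*))$, giving the desired identity.

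The main obstacle, and the step deserving the most care, is the conjugation computation and the bidual bookkeeping. One must verify that $f^* \in \PCLSC(E^*)$ so that $f\dbs$ is well defined, and one must track the pairing conventions so that the separable-sum conjugate lands correctly in $E^* \times E\dbs$ with the pairing $\Bra{(x,x^*)}{(y^*,y\dbs)} = \bra{x}{y^*} + \bra{x^*}{y\dbs}$; a sign or slot error here would corrupt the final equality. A secondary subtlety is confirming that the generic inequality $\varphi_{\partial f} \le f \oplus f^*$ is in fact an equality as functions (not merely on the graph), since the conjugation step uses the full function; this follows from the convexity and lower semicontinuity of $f$ together with the Brøndsted--Rockafellar theorem, exactly as in the proof of \Thm{RTRthm}. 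Once these identifications are secured, the equivalence of the two membership conditions is immediate, and no further estimates are required.
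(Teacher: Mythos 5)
Your argument hinges on the identity $\varphi_{\partial f} = f \oplus f^*$ as functions on $E \times E^*$, and that identity is false in general: only the inequality $\varphi_{\partial f} \le f \oplus f^*$ holds, with equality on $G(\partial f)$ but typically not elsewhere. Concretely, take $E$ a Hilbert space and $f = \half\|\cdot\|^2$, so that $\partial f$ is the identity; then $\varphi_{\partial f}(x,x^*) = \supn_{s}\big[\bra{s}{x^*} + \bra{x}{s} - \|s\|^2\big] = \fourth\|x + x^*\|^2$, which is strictly smaller than $f(x) + f^*(x^*) = \half\|x\|^2 + \half\|x^*\|^2$ whenever $x \ne x^*$. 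The Br{\o}ndsted--Rockafellar theorem cannot repair this: it provides norm-approximation of approximate subgradients by exact ones (which is what drives the proof of \Thm{RTRthm}), but it does not upgrade the inequality between these two convex functions to an equality, because the equality simply fails. What survives of your computation is one inclusion: from $\varphi_{\partial f} \le f \oplus f^*$, conjugation reverses the inequality to give $({\varphi_{\partial f}})^* \ge f^* \oplus f\dbs$ on $E^* \times E\dbs$, and since Fenchel--Young gives $f^*(y^*) + f\dbs(y\dbs) \ge \bra{y^*}{y\dbs}$ always, the condition $({\varphi_{\partial f}})^*(y^*,y\dbs) = \bra{y^*}{y\dbs}$ squeezes $f^*(y^*) + f\dbs(y\dbs) = \bra{y^*}{y\dbs}$, proving $G\big((\partial f)^\F\big) \subseteq G\big(\partial(f^*)\big)$.

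The reverse inclusion $\partial(f^*) \subseteq (\partial f)^\F$ is the genuinely hard direction, and it is precisely where your proposal collapses: you would need $({\varphi_{\partial f}})^*(y^*,y\dbs) \le \bra{y^*}{y\dbs}$ at points of $G(\partial(f^*))$, i.e.\ an upper bound on $({\varphi_{\partial f}})^*$, and nothing in your setup supplies it once the separable-sum identity is gone. As \Rem{FITZDIFFrem} notes, this statement is equivalent to Gossez's Th\'eor\`eme 3.1 in \cite{GOSSEZ}, whose proof requires a delicate weak* approximation of points of $G(\partial(f^*))$ by the graph of $\partial f$; the paper itself proves \Thm{FITZDIFFthm} only by citation to \cite[Theorem 5.7]{PARTTWO}, describes that proof as quite convoluted, and explicitly poses the existence of a simple direct proof as an open question in \Prob{FITZDIFFprob}. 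A short two-step conjugation argument of the kind you outline would resolve that open problem, which is a strong signal---borne out by the Hilbert-space counterexample above---that the central step cannot be correct as stated.
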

\begin{proof}
See \cite[Theorem 5.7]{PARTTWO}.
\end{proof}
\begin{remark}\label{FITZDIFFrem}
\Thm{FITZDIFFthm} is equivalent to \cite[Th\'eor\`eme~3.1, pp.\ 376--378]{GOSSEZ}.
\end{remark}
\begin{problem}\label{FITZDIFFprob}
The proof of \cite[Theorem 5.7]{PARTTWO} (invoked in \Thm{FITZDIFFthm}) is quite convoluted.   Is there a simple direct proof of this result?
\end{problem}
\begin{remark}\label{FGrem}
\Thms{FITZDIFFthm} and \ref{RTRthm} show that {\em if $f \in \PCLSC(E)$ then $(\partial f)^\F$ is quasidense}, in other words, in this restricted situation we have a positive solution to \Prob{AFMAXprob}.   
\end{remark}
\section{Quasidensity and the classification of\\ maximally monotone multifunctions}\label{CLASSsec}
The closed, monotone, quasidense multifunctions have relationships with many other subclasses of the maximally monotone multifunctions.   We shall discuss just three of these.   Four others are mentioned in the introduction.
\begin{definition}\label{FPVdef}
Let $S\colon\ E \toto E^*$ be monotone.   We say that $S$ is {\em of type (FPV)} or {\em maximally monotone locally} if, whenever $U$ is an open convex subset of $E$, $U \cap D(S) \neq \emptyset$, $(w,w^*) \in U \times E^*$ and
\begin{equation}\label{FPV1}
(s,s^*) \in G(S)\quand s \in U \qlr \bra{s - w}{s^* - w^*}\ge 0,
\end{equation}
then $(w,w^*) \in G(S)$.   (If we take $U = E$, we see that every monotone\break multifunction of type (FPV) is maximally monotone.)  See \cite[pp.\ 150--151]{HBM}. 
\end{definition}
\begin{theorem}\label{FPVthm}
Any closed, monotone, quasidense multifunction is maximally monotone of type (FPV).
\end{theorem}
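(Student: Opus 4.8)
The plan is to verify the defining implication of type (FPV) directly, obtaining $(w,w^*) \in G(S)$ without any separate contradiction argument. Fix an open convex $U$ with $U \cap D(S) \ne \emptyset$, a pair $(w,w^*) \in U \times E^*$ satisfying the monotone-relatedness hypothesis \eqref{FPV1}, and a point $a \in U \cap D(S)$. The idea is to \emph{localize} the problem to $U$ by adding to $S$ the normal-cone operator of a compact convex set $C$ squeezed between the two relevant points and the open set, that is, with $w, a \in \intr C$ and $C \subseteq U$, and then to read off $(w,w^*) \in G(S)$ from the maximality of the localized sum.

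Concretely, I would first choose $r > 0$ small enough that the closed $r$-neighborhood $C := \{x \in E : \dist(x,[w,a]) \le r\}$ of the segment $[w,a]$ satisfies $C \subseteq U$; this is possible because $[w,a]$ is a compact subset of the open set $U$ (and when $U = E$ any $r$ works). The set $C$ is closed, bounded and convex, being a sublevel set of the convex function $\dist(\cdot,[w,a])$, and both $w$ and $a$ lie in $\intr C$. Let $\iota_C \in \PCLSC(E)$ be the indicator function of $C$ and $N_C := \partial\iota_C$ its subdifferential, the normal-cone operator; by \Thm{RTRthm}, $N_C$ is closed, monotone and quasidense. Since $a \in D(S) \cap \intr C$, the constraint $D(S) \cap \intr D(N_C) \ne \emptyset$ holds, so \Thm{STDthm} shows that $T := S + N_C$ is closed, monotone and quasidense, and \Thm{RLMAXthm} then shows that $T$ is maximally monotone.

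Next I would check that $(w,w^*)$ is monotonically related to every element of $G(T)$. A typical such element is $(x,\,x^* + v^*)$ with $x \in C$, $x^* \in S(x)$ and $v^* \in N_C(x)$. Because $C \subseteq U$ we have $x \in U$, so \eqref{FPV1} gives $\bra{x - w}{x^* - w^*} \ge 0$; and because $w \in C$, the defining inequality of the normal cone gives $\bra{x - w}{v^*} \ge 0$. Adding these yields $\bra{x - w}{(x^* + v^*) - w^*} \ge 0$. Maximality of $T$ then forces $(w,w^*) \in G(T)$, i.e. $w^* \in S(w) + N_C(w)$; since $w \in \intr C$ we have $N_C(w) = \{0\}$, whence $w^* \in S(w)$, that is, $(w,w^*) \in G(S)$, as required.

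The main obstacle is the \emph{boundary}: the hypothesis \eqref{FPV1} supplies the monotone inequality only when the first coordinate lies in the \emph{open} set $U$, so the localizing operator must have its entire domain inside $U$ --- this rules out the naive choice $N_{\overline U}$, whose domain $\overline U$ meets $\partial U$, where \eqref{FPV1} is unavailable. At the same time \Thm{STDthm} demands a point of $D(S)$ in the interior of that domain, and the final step demands $w$ in the interior as well. The neighborhood $C$ of the segment $[w,a]$ is precisely what reconciles these three competing requirements, and verifying that such a $C$ exists reduces to the elementary separation of the compact segment $[w,a]$ from the complement of $U$.
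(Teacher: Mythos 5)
Your proof is correct, and it follows essentially the same route as the proof the paper defers to (\cite[Theorem 7.2]{PARTTWO}): localize $S$ by adding the normal cone operator $N_C = \partial \iota_C$ of a bounded closed convex neighborhood $C$ of the segment $[w,a]$ contained in $U$, invoke \Thm{RTRthm} and the sum theorem \Thm{STDthm} (whose constraint qualification holds because $a \in D(S) \cap \intr C$), and conclude from the maximal monotonicity of $S + N_C$ (\Thm{RLMAXthm}) together with $N_C(w) = \{0\}$ at the interior point $w$. Since the survey gives no inline proof, your argument has the additional merit of being completely self-contained from the survey's own stated results, and your closing discussion correctly identifies the one delicate point --- that the localizing set must avoid $\partial U$ while keeping both $w$ and a point of $D(S)$ in its interior.
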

\begin{proof}
See \cite[Theorem 7.2]{PARTTWO}. 
\end{proof}
\begin{problem}\label{FPVprob}
Is every maximally monotone multifunction of type (FPV)?    The tail operator (see \Ex{TAILex}) does not provide an negative example because it was proved in  Fitzpatrick--Phelps, \cite[Theorem 3.10, p.\ 68]{FITZTWO} that if $S\colon\ E \toto E^*$ is maximally monotone and $D(S) = E$ then $S$ is of type (FPV).      Also, it was proved in \cite[Theorem 46.1, pp.\ 180--182]{HBM} that {\em if $S\colon E \toto E^*$ is maximally monotone and $G(S)$ is convex then $S$ is of type (FPV).}   A negative example would lead to a negative solution for the sum problem.   See \cite[Theorem 44.1, p.\ 170]{HBM} 
\end{problem}
\begin{definition}\label{FPdef}
Let $S\colon\ E \toto E^*$ be monotone.   We say that $S$ is {\em of type (FP)} or {\em locally maximally monotone} if, whenever $\UT$ is a convex open subset of $E^*$, $\UT \cap R(S) \neq \emptyset$, $(w,w^*) \in E \times \UT$ and
\begin{equation}\label{FP41}
(s,s^*) \in G(S)\ \hbox{and}\ s^* \in \UT \qlr \bra{s - w}{s^* - w^*} \ge 0,
\end{equation}
then $(w,w^*) \in G(S)$.  \big(If we take $\UT = E^*$, we see that every monotone multifunction of type (FP) is maximally monotone.\big) See \cite[pp.\ 149--150]{HBM}.
\end{definition}
\begin{theorem}\label{FPthm}
A maximally monotone multifunction is quasidense $\iff$ it is of type (FP).  
\end{theorem}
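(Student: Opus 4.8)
The plan is to prove the two implications separately, in each case exploiting the elementary reformulation of quasidensity furnished by \eqref{J1}. Writing $\theta(u,u^*):=\half\|u\|^2+\half\|u^*\|^2+\bra{u}{u^*}$, the inequality $\half\|u\|^2+\half\|u^*\|^2\ge\|u\|\,\|u^*\|\ge-\bra{u}{u^*}$ shows that $\theta\ge0$ always, and \eqref{J1} shows that $\theta(u,u^*)=0$ exactly when $-u^*\in Ju$. Hence \Def{QDdef} asserts precisely that, for every $(x,x^*)$, the translated graph $G(S)-(x,x^*)$ carries points on which $\theta$ is arbitrarily small; equivalently, $x^*$ lies in the closure of $R({_x}S+J)$, so quasidensity is an approximate-surjectivity statement for the family $\{{_x}S+J\}_{x\in E}$. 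This is the common language into which I would translate type (FP).

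For the implication quasidense $\Rightarrow$ type (FP), I would first reduce to the origin. Given $\UT$, $(w,w^*)$ and \eqref{FP41}, the monotone translation $(s,s^*)\mapsto(s-w,s^*-w^*)$ preserves closedness, maximal monotonicity and, since $\theta$ depends only on differences, quasidensity; it reduces the claim to the case $(w,w^*)=(0,0)$, with $\UT$ an open convex neighbourhood of $0$ meeting $R(S)$ and \eqref{FP41} reading $\bra{s}{s^*}\ge0$ whenever $(s,s^*)\in G(S)$ and $s^*\in\UT$. Applying quasidensity at $(0,0)$ produces $(s_n,s_n^*)\in G(S)$ with $\theta(s_n,s_n^*)\to0$. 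If these could be taken with $s_n^*\in\UT$, then $\bra{s_n}{s_n^*}\ge0$ would force $\half\|s_n\|^2+\half\|s_n^*\|^2\to0$, so $(s_n,s_n^*)\to(0,0)$ and closedness of $G(S)$ would give $(0,0)\in G(S)$, as required.

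For the converse, type (FP) $\Rightarrow$ quasidense, I would argue by contraposition. If $S$ is not quasidense then, after the same translation, there is a point, which I take to be $(0,0)$, and a $\delta>0$ with $\theta(s,s^*)\ge\delta$ for all $(s,s^*)\in G(S)$; in particular $(0,0)\notin G(S)$, and moreover $\bra{s}{s^*}>0$ for every graph point with $\half\|s\|^2+\half\|s^*\|^2<\delta$. The goal is to manufacture an open convex $\UT\ni0$ meeting $R(S)$ on which \eqref{FP41} holds with $(w,w^*)=(0,0)$; type (FP) would then force the contradiction $(0,0)\in G(S)$.

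The main obstacle, present in both directions, is that a slab of the form $s^*\in\UT$ constrains only the range variable $s^*$ and leaves $\|s\|$ unbounded, so the sign information on $\bra{s}{s^*}$ that $\theta$ supplies for small $(s,s^*)$ cannot be transferred to all graph points meeting the slab. Overcoming this is the real content of the theorem: I would handle it either by working with the Fitzpatrick function $\varphi_S$, which couples the two variables and converts the slab restriction into a genuine coercivity estimate, or by intersecting the slab with $S$ through a normal-cone perturbation and invoking the sum theorem (\Thm{STDthm}); either device supplies the missing bound on the domain variable and closes both implications.
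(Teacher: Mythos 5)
There is a genuine gap here, and to your credit you name it yourself: both implications stop exactly where the theorem begins. In the forward direction your step ``if these could be taken with $s_n^*\in\UT$'' is not a technicality but the whole difficulty --- small $\theta(s_n,s_n^*)$ only yields $\half(\|s_n\|-\|s_n^*\|)^2\le\theta(s_n,s_n^*)$, so without the sign information from \eqref{FP41} you cannot conclude $s_n^*\to0$, and without $s_n^*\in\UT$ you cannot invoke \eqref{FP41}: the argument is circular at precisely the point you flag. In the contrapositive direction, the bound $\bra{s}{s^*}\ge\delta-\half\|s\|^2-\half\|s^*\|^2$ gives sign control only where \emph{both} variables are small, while a slab $s^*\in\UT$ leaves $\|s\|$ free, so no admissible $\UT$ is produced. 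The paper does not close this gap either; it defers entirely to \cite[Theorem 10.3]{PARTTWO}, whose key ingredient is \cite[Lemma 10.1]{PARTTWO}, and \Prob{FPprob} in this very paper records that no simple direct proof of that lemma is known. So the ``real content'' you postpone to a final paragraph is a recognized hard point in the literature (compare the cited results of Marques Alves--Svaiter, Voisei--Z\u{a}linescu, and Bauschke--Borwein--Wang--Yao), not something a closing remark can absorb.

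Two further specifics. First, your proposed rescue via a normal-cone perturbation and \Thm{STDthm} cannot work as stated: in the contrapositive direction $S$ is assumed \emph{not} quasidense, so neither sum theorem applies to it at all; and even in the forward direction the constraint $s^*\in\UT$ is a \emph{range} constraint, so the domain-constraint theorem \Thm{STDthm} does not engage, while the range-constraint theorem \Thm{STRthm} concludes quasidensity of $y\mapsto(S^\F+T^\F)^{-1}(\wh y)$, which does not directly deliver $(w,w^*)\in G(S)$. Second, your preliminary ``common language'' claim --- that quasidensity is equivalent to $x^*\in\overline{R({_x}S+J)}$ for all $(x,x^*)$ --- is justified in Hilbert space (\Sec{Hsec}) but not in general: the inequality $\theta(t,s^*-x^*)\le\eps$ says $x^*-s^*\in\partial_\eps\bigl(\half\|\cdot\|^2\bigr)(t)$, and a Br{\o}ndsted--Rockafellar correction moves $t$ to a nearby $t'$ at which $s^*$ need no longer belong to ${_x}S(t')$; the paper itself only asserts the one-sided \Lem{Jlem} and, in the reflexive case, \Thm{Rthm}. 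Since this reformulation is not load-bearing in your sketch it does not add a second fatal flaw, but it should not be stated as an equivalence.
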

\begin{proof}
See \cite[Theorem 10.3]{PARTTWO}. This result is related to results of Marques Alves and Svaiter, \cite[Theorem~1.2(1$\ifff$5), p.\ 885]{ASD}, Voisei and Z\u{a}linescu, \cite[Theorem 4.1, pp.\ 1027--1028]{VZ} and Bauschke, Borwein, Wang and Yao, \cite[Theorem~3.1, pp.\ 1878--1879]{BBWYFP}.   
\end{proof}
\begin{problem}\label{FPprob}
The proof of ($\lr$) in \Thm{FPthm} relies on \cite[Lemma 10.1]{PARTTWO}.   Is there a {\em simple direct} proof of this result?   In this connection, see also the proof of \cite[Lemma 12.2]{PARTTWO}, which is hardly simple and direct.
\end{problem}
\begin{definition}\label{STRONGdef}
Let $S\colon\ E \toto E^*$ be monotone.  We say that $S$ is {\em strongly maximal} (see \cite[Theorems~6.1-2, pp.\ 1386--1387]{CONTROLLED}) if, whenever $w \in E$ and $\WT$ is a nonempty $w(E^*,E)$--compact convex subset of $E^*$ such that, 
\begin{equation*}
\all\ (s,s^*) \in G(S),\ \max\bra{s - w}{s^* - \WT} \ge 0,\end{equation*}
then $Sw \cap \WT \ne \emptyset$ and, further, whenever $W$ is a nonempty $w(E,E^*)$--compact convex subset of $E$, $w^* \in E^*$ and,
\begin{equation*}
\all\ (s,s^*) \in G(S),\ \max\bra{s - W}{s^* - w^*} \ge 0,
\end{equation*}
then $w^* \in S(W)$.   This property was originally proved for convex subdifferentials.   If $S$ is strongly maximal then clearly $S$ is maximal.
\end{definition}
\begin{theorem}\label{STRONGthm}
Let $S\colon\ E \toto E^*$ be closed, monotone and quasidense.  Then $S$ is strongly maximal.
\end{theorem}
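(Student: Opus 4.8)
\Def{STRONGdef} contains two dual assertions, and the plan is to treat them separately, in each case reducing to the sum theorem \Thm{STDthm} and to maximality \Thm{RLMAXthm}. Both assertions have the same shape: from the hypothesis that \emph{for each graph point some element of a fixed compact convex set is monotonically related to it}, one must produce a \emph{single} such element lying in $G(S)$. This is a minimax interchange whose nontrivial direction fails for merely monotone $S$; quasidensity is precisely what supplies it. Since $x\mapsto S(x+w)$ and $x\mapsto Sx-w^*$ are again closed, monotone and quasidense, I may assume $w=0$ in the first assertion and $w^*=0$ in the second.

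First I would prove the assertion for the $w(E^*,E)$-compact convex $\WT\subseteq E^*$. After the reduction the hypothesis reads $\bra{s}{s^*}\ge\min_{w^*\in\WT}\bra{s}{w^*}$ for every $(s,s^*)\in G(S)$, and the goal is $S0\cap\WT\ne\emptyset$. Let $p\colon E\to\RR$ be the support function $p(x):=\max_{w^*\in\WT}\bra{x}{-w^*}=-\min_{w^*\in\WT}\bra{x}{w^*}$. Because $\WT$ is $w(E^*,E)$-compact it is bounded, so $p$ is finite, continuous and sublinear; hence $p\in\PCLSC(E)$, $D(\partial p)=E$, $\partial p(0)=-\WT$, and, by \Thm{RTRthm}, $\partial p$ is closed, monotone and quasidense. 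As $D(S)\cap\intr D(\partial p)=D(S)\ne\emptyset$, \Thm{STDthm} makes $S+\partial p$ closed, monotone and quasidense, so it is maximally monotone by \Thm{RLMAXthm}. Now for any $(u,u^*)\in G(S+\partial p)$ write $u^*=s^*+q^*$ with $s^*\in Su$ and $q^*\in\partial p(u)$; the homogeneity relation $\bra{u}{q^*}=p(u)=-\min_{w^*\in\WT}\bra{u}{w^*}$ for sublinear $p$, together with the hypothesis applied to $(u,s^*)\in G(S)$, gives $\bra{u}{u^*}=\bra{u}{s^*}+\bra{u}{q^*}\ge0$. Thus $(0,0)$ is monotonically related to $G(S+\partial p)$, and maximality forces $(0,0)\in G(S+\partial p)$, i.e. $0\in S0+\partial p(0)=S0-\WT$, which is exactly $S0\cap\WT\ne\emptyset$.

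The assertion for the $w(E,E^*)$-compact convex $W\subseteq E$ is formally dual: the hypothesis becomes $\bra{s}{s^*}\ge\min_{w\in W}\bra{w}{s^*}$ and the goal is $0\in S(W)$, i.e. $(w,0)\in G(S)$ for some $w\in W$. When $E$ is reflexive this is the first assertion applied to $S^{-1}\colon E^*\toto E$, which is again closed, monotone and quasidense, so the reflexive case is already done. The hard part is the nonreflexive case, and here lies the main obstacle. The naive dual construction adds the normal cone $N_W$ to $S$, but a $w(E,E^*)$-compact $W$ typically has empty norm-interior, so the constraint qualification of \Thm{STDthm} is unavailable, and the term $\min_{w\in W}\bra{w}{s^*}$ to be cancelled now lives on the range side, where the convenient calculus is that of the Fitzpatrick extension. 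Using $(x,x^*)\in G(S)\iff(x^*,\wh x)\in G(S^\F)$ one has $0\in S(W)\iff S^\F(0)\cap\wh W\ne\emptyset$, with $\wh W\subseteq E\dbs$ being $w(E\dbs,E^*)$-compact convex, and $S^\F$ is maximally monotone by \Thm{AFMAXthm}; I would run the first assertion's argument inside the Banach space $E^*$ via \Thm{STRthm}. The delicate point is that this cannot simply invoke quasidensity of $S^\F$ (that is exactly the open \Prob{AFMAXprob}): one must instead argue directly from the quasidensity of $S$ itself. Concretely I would minimize the convex, $w(E,E^*)$-lower semicontinuous function $w\mapsto\varphi_S(w,0)$ over the weakly compact $W$ — the minimum is attained and is $\ge0$ — and use quasidensity of $S$ (surjectivity of ${}_xS+J$) to force the minimum value to be $0$, so that the minimizer $w$ satisfies $\varphi_S(w,0)=\bra{w}{0}$, i.e. $(w,0)\in G(S)$. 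Carrying out this nonreflexive, range-side argument without circularity is the principal difficulty.
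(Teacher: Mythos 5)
Your treatment of the first assertion (the $w(E^*,E)$-compact convex $\WT \subseteq E^*$) is correct and complete: the translation reduction to $w=0$ is legitimate since quasidensity is invariant under passing to ${}_wS$; the support function $p = \sigma_{-\WT}$ is indeed finite, continuous and sublinear, with $\partial p(0) = -\WT$ by weak* separation; \Thm{RTRthm}, \Thm{STDthm} (the constraint qualification is trivially met because $D(\partial p)=E$) and \Thm{RLMAXthm} then make $S+\partial p$ maximally monotone; and the identity $\bra{u}{q^*}=p(u)$ for $q^*\in\partial p(u)$, valid for sublinear $p$, combines with the hypothesis to show $(0,0)$ is monotonically related to $G(S+\partial p)$, forcing $0\in S0-\WT$. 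Note that the paper itself gives no proof of \Thm{STRONGthm}, deferring entirely to \cite[Theorem 8.5]{PARTTWO}, so your first half is a genuine self-contained derivation from results quoted in this survey, which is a nice contribution in itself.

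The second assertion, however, is where your proof stops being a proof, and the gap is not merely one of detail. You correctly diagnose that the sum-theorem route fails (a $w(E,E^*)$-compact $W$ has empty norm interior, so \Thm{STDthm} is unavailable, and routing through $S^\F$ collides with the open \Prob{AFMAXprob}), but the repair you then propose is invalid: you say you would ``use quasidensity of $S$ (surjectivity of ${}_xS+J$) to force the minimum value to be $0$.'' Surjectivity of ${}_xS+J$ is equivalent to quasidensity only in the reflexive case (\Thm{Rthm}); in the nonreflexive case --- which is exactly the case you are trying to handle --- the paper's own \Ex{NRex} exhibits $S=0$, closed, monotone and quasidense, with ${}_xS+J=J$ never surjective. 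So the tool you name does not exist where you need it, and what remains is the bare assertion that $\min_{w\in W}\varphi_S(w,0)\le 0$, i.e.\ precisely the minimax interchange $\sup_{G(S)}\min_W \le 0 \Rightarrow \min_W\sup_{G(S)}\le 0$ over the nonconvex set $G(S)$ that constitutes the entire difficulty (standard minimax theorems do not apply, as concavity in $(s,s^*)$ fails). Your own closing sentence concedes this. As it stands, the proposal proves half of \Thm{STRONGthm}; the other half would have to come from an argument along the lines of \cite[Theorem 8.5]{PARTTWO}, which works directly from quasidensity rather than through surjectivity of ${}_xS+J$ or through $S^\F$.
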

\begin{proof}
See \cite[Theorem 8.5]{PARTTWO}.
\end{proof}
\begin{problem}\label{STRONGprob}
\par
Is every maximally monotone multifunction strongly maximal?   The tail operator (see \Ex{TAILex}) does not provide a negative example because it was proved in  Bauschke--Simons, \cite[Theorem 1.1, pp.\ 166--167]{BS} that if $S\colon\ D(S) \subset E \to E^*$ is linear and maximally monotone then $S$ is strongly maximal.   More generally, it was proved in \cite[Theorem 46.1, pp.\ 180--182]{HBM} that {\em if $S\colon E \toto E^*$ is maximally monotone and $G(S)$ is convex then $S$ is strongly maximal}. 
\end{problem}
\section{The Hilbert space case}\label{Hsec}
Let $H$ be a real Hilbert space and $I\colon\ H \to H$ be the identity map. As usual, we identify $H^*$ with $H$.   Let $S\colon\ H \toto H$.    From \Def{QDdef} and the properties of Hilbert spaces, $S$ is quasidense exactly when, for all $(x,x^*) \in H \times H$,
\begin{equation*}
\infn_{(s,s^*) \in G(S)}\half\|s + s^* - x - x^*\|^2 \le 0,
\end{equation*}
that is to say, for all $z^* \in H$, $\infn_{(s,s^*) \in G(S)}\half\|s + s^* - z^*\|^2 \le 0$.   This is equivalent to the statement that
$\{s + s^*\colon\ (s,s^*) \in G(S)\}$ is dense in $H$, that is to say $R(S + I)$ is dense in $H$.   This leads to the following result:
\begin{theorem}\label{Hthm}
Let $S\colon\ H \toto H$ be closed and monotone.   Then $S$ is quasidense if, and only if, $S + I$ is surjective. 
\end{theorem}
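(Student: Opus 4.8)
The plan is to exploit the reformulation established just before the theorem: since we identify $H^*$ with $H$, the multifunction $S$ is quasidense exactly when $R(S+I)$ is dense in $H$. Thus the real content of the theorem is that, for a closed monotone $S$, density of $R(S+I)$ is equivalent to surjectivity. One implication is immediate: if $S+I$ is surjective then $R(S+I) = H$ is certainly dense, so $S$ is quasidense. For the converse I would assume $S$ quasidense, so that $R(S+I)$ is dense, and then show that $R(S+I)$ is in fact closed; density together with closedness forces $R(S+I) = H$.

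The key observation is that monotonicity makes $S+I$ strongly monotone. Indeed, if $y_i \in (S+I)(x_i)$ for $i = 1,2$, write $y_i = s_i^* + x_i$ with $s_i^* \in S(x_i)$; then monotonicity of $S$ gives $\bra{x_1 - x_2}{y_1 - y_2} = \bra{x_1 - x_2}{s_1^* - s_2^*} + \|x_1 - x_2\|^2 \ge \|x_1 - x_2\|^2$. Combining this with the Cauchy--Schwarz inequality yields $\|y_1 - y_2\| \ge \|x_1 - x_2\|$. In particular $(S+I)^{-1}$ is single-valued on $R(S+I)$ and is nonexpansive there.

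It remains to show that $R(S+I)$ is closed, and this is the step I expect to carry the real weight of the argument. I would take a sequence $(y_n)$ in $R(S+I)$ with $y_n \to y$ in $H$ and set $x_n := (S+I)^{-1}(y_n)$. By the nonexpansiveness just established, $\|x_n - x_m\| \le \|y_n - y_m\|$, so $(x_n)$ is Cauchy and converges to some $x \in H$. Then $y_n - x_n \in S(x_n)$, with $x_n \to x$ and $y_n - x_n \to y - x$; since $G(S)$ is closed, $(x, y - x) \in G(S)$, i.e.\ $y - x \in S(x)$, so $y \in (S+I)(x) \subseteq R(S+I)$. Hence $R(S+I)$ is closed. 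Being both dense and closed in $H$, it equals $H$, so $S+I$ is surjective, which completes the proof. The only subtlety to watch is the division of labor between the hypotheses: single-valuedness and nonexpansiveness of $(S+I)^{-1}$ rely solely on monotonicity, whereas closedness of the graph is precisely what allows the passage to the limit that upgrades ``dense'' to ``closed'' and hence to ``all of $H$''.
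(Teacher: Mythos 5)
Your proof is correct, but it takes a genuinely different route from the paper. The paper's proof of the forward implication is a two-line appeal to machinery: since $S$ is closed, monotone and quasidense, \Thm{RLMAXthm} makes it maximally monotone, and then surjectivity of $S+I$ is exactly Minty's theorem. You instead argue directly from the reformulation (quasidense $\iff$ $R(S+I)$ dense, established just before the theorem) and upgrade density to surjectivity by proving $R(S+I)$ is closed: the strong monotonicity estimate $\bra{x_1-x_2}{y_1-y_2} \ge \|x_1-x_2\|^2$, combined with Cauchy--Schwarz, makes $(S+I)^{-1}$ single-valued and nonexpansive on $R(S+I)$, and then norm-closedness of $G(S)$ (sequences suffice, as $H \times H$ is metric) lets you pass to the limit along a Cauchy sequence. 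Every step checks out, including the trivial case $x_1 = x_2$ in the nonexpansiveness argument. What your approach buys is self-containedness: it avoids both \Thm{RLMAXthm} (whose proof lives in external references) and Minty's theorem, in effect re-deriving the only fragment of that circle of ideas actually needed here, and it isolates the clean standalone fact that for closed monotone $S$ on a Hilbert space $R(S+I)$ is automatically closed, so dense range already forces surjectivity. What the paper's route buys is brevity and integration with the general theory: the same pattern (quasidense $\Rightarrow$ maximally monotone $\Rightarrow$ surjectivity of a perturbed map) is reused verbatim in the reflexive case in \Thm{Rthm}, with $J$ in place of $I$, where your Hilbert-space inner-product computation would no longer be available in that simple form.
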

\begin{proof}
``If'' is obvious from the comments above.   Suppose, conversely, that $S$ is quasidense.   Then, from \Thm{RLMAXthm}, $S$ is maximally monotone, and the surjectivity of $S + I$ follows from Minty's theorem.
\end{proof}
Monotonicity plays a mysterious role in \Thm{Hthm}.   This is shown by the following example.
\begin{example}
Define $S\colon\ \RR \toto \RR$ by
\begin{equation*}
S(x) :=
\begin{cases}
\{1/x - x\}&(x \ne 0);\\
\emptyset&(x = 0).
\end{cases}
\end{equation*}
Clearly, $S$ is closed.  Then
\begin{equation*}
(S + I)(x) =
\begin{cases}
\{1/x\}&(x \ne 0);\\
\emptyset&(x = 0).
\end{cases}
\end{equation*}
Thus $R(S + I) = \RR \setminus \{0\}$.   Since this is dense in $\RR$, $S$ is quasidense.   But $S + I$ is manifestly not surjective. 
\end{example}
\section{The reflexive Banach space case}\label{Rsec}
Let $E$ be a real reflexive Banach space.
\begin{theorem}\label{Rthm}
Let $S\colon\ E \toto E^*$ be closed and monotone.   Then $S$ is quasidense if, and only if, for all $x \in E$, $_xS + J$ is surjective. 
\end{theorem}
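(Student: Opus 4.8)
The plan is to prove the two implications separately, observing that one of them is essentially already in hand. For the implication ``$\Leftarrow$'', suppose that $_xS + J$ is surjective for every $x \in E$. This is exactly the hypothesis of \Lem{Jlem}, so $S$ is quasidense. Note that this direction uses neither the closedness nor the monotonicity of $S$, and indeed holds in an arbitrary Banach space; reflexivity plays no role in it.

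For the implication ``$\Rightarrow$'', suppose that $S$ is closed, monotone and quasidense. By \Thm{RLMAXthm}, $S$ is maximally monotone. Fix $x \in E$. Since $G(_xS)$ is the translate $G(S) + (-x,0)$ and $\bra{(t_1+x)-(t_2+x)}{t_1^*-t_2^*} = \bra{t_1-t_2}{t_1^*-t_2^*}$, monotonicity and maximality are preserved, so $_xS$ is again maximally monotone. I would then invoke the extension of Minty's theorem to reflexive Banach spaces (the same surjectivity result used in \Thm{Hthm}, now due to Rockafellar): if $M\colon E \toto E^*$ is maximally monotone and $E$ is reflexive, then $R(M + J) = E^*$, with $J$ the (possibly multivalued) duality map. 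Applying this to $M = {_xS}$ shows that $_xS + J$ is surjective, and since $x$ was arbitrary this finishes the forward direction.

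The substantive content, and the main obstacle, is precisely the surjectivity theorem invoked above; everything else is bookkeeping. If one wishes to avoid citing it and argue directly, the natural route is to show that the infimum in \Def{QDdef} is attained. Since $\half\|a\|^2 + \half\|b\|^2 + \bra{a}{b} \ge \half(\|a\| - \|b\|)^2 \ge 0$, quasidensity forces that infimum to equal $0$; one then takes a minimizing sequence $(s_n,s_n^*) \in G(S)$, uses reflexivity to extract a weakly convergent subsequence, and combines the weak-strong closedness of the graph of the maximally monotone $S$ with the weak lower semicontinuity of the norm to pass to the limit. An exact minimizer $(s,s^*)$ then satisfies $\half\|s-x\|^2 + \half\|s^*-x^*\|^2 + \bra{s-x}{s^*-x^*} = 0$, which by \eqref{J1} means $x^* - s^* \in J(s-x)$, i.e. $x^* \in {_xS}(s-x) + J(s-x)$, exhibiting $x^*$ in the range of $_xS + J$. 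Carrying out this attainment argument cleanly is exactly the hard part, and it is what the cited reflexive surjectivity theorem packages for us.
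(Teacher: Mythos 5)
Your proof is correct and follows essentially the same route as the paper: the ``if'' direction by \Lem{Jlem}, and the ``only if'' direction by translating to $_xS$, applying \Thm{RLMAXthm}, and citing the reflexive-space surjectivity theorem $R(M+J)=E^*$ (the paper cites \cite[Theorem 10.7, p.\ 24]{MANDM} for this, reserving Rockafellar's generalization of Minty's theorem for the Asplund-renorming case in \Rem{Rrem}). Your appended sketch of a direct attainment argument is rightly flagged as the hard part and is not needed, since the cited theorem carries that load.
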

\begin{proof}
``If'' was established in \Lem{Jlem}.   Suppose, conversely, that $S$ is quasidense and $x \in E$.   Since $G(_xS) = G(S) - (x,0)$, $_xS$ is closed, monotone and quasidense.  \Thm{RLMAXthm} implies that $_xS$ is maximally monotone, and the surjectivity of $_xS + J$ follows from \cite[Theorem 10.7, p.\ 24]{MANDM}.
\end{proof}
\begin{remark}\label{Rrem}
If the norm of $E$ is produced by an Asplund renorming, one can use Rockafellar's generalization \cite{RTRSUMS} of Minty's theorem instead of the result cited from \cite{MANDM} to prove that $_xS + J$ is surjective in \Thm{Rthm}($\lr$).
\par
We shall see in \Ex{Rex} below that the surjectivity of $S + J$ alone is not enough to ensure the quasidensity of $S$ in \Thm{Rthm}($\rl$).
\end{remark}
\begin{example}\label{Rex}
Define the norms $\|\cdot\|_1$ and $\|\cdot\|_\infty$ on $\RR^2$ by $\|(x_1,x_2)\|_1 = |x_1| + |x_2|$ and $\|(y_1,y_2)\|_\infty = |y_1| \vee |y_2|$.   Let $E := (\RR^2,\|\cdot\|_1)$.   Then $E^* = (\RR^2,\|\cdot\|_\infty)$.  Let $A$ be the union of the two axes in $E$, that is to say, $A = (\RR,0) \cup (0,\RR)$.   Define $S\colon E\toto E^*$ by 
\begin{equation*}
S(x) =
\begin{cases}
J(x) &(x \in A);\\
\emptyset &(x \not\in A).
\end{cases}
\end{equation*}
Since $G(S) = G(J) \cap (A \times E^*)$, $S$ is closed and monotone.   We shall prove that
{\em\begin{equation}\label{R1}
S + J \hbox{ is surjective}
\end{equation}
but
\begin{equation}\label{R2}
S\hbox{ is not quasidense.}
\end{equation}}
\par
\noindent
Let $P$ be the square $\{y \in E^*\colon\ \|y\|_\infty = 1\}$,   $P_E$ be the line segment $\{1\} \times [-1,1]$, $P_N$ be the line segment $[-1,1] \times \{1\}$, $P_W$ be the line segment  $-P_E$, and $P_S$ be the line segment $-P_N$.   ($E$, $N$, $W$ and $S$ stand for East, North, West and South, respectively.)   Clearly, $P = P_E \cup P_N \cup P_W \cup P_S$.   Let $e_1 = (1,0)$ and $e_2 = (0,1)$.  
\par
(a)\enspace
If $y \in P_E$, then $\half\|e_1\|_1^2 + \half\|y\|_\infty^2 = \half + \half = 1 =  \bra{e_1}{y}$.    Thus $y \in J(e_1)$.
\par
(b)\enspace
If $y \in P_N$ then, interchanging the indices 1 and 2 in (a), $y \in J(e_2)$.
\par
(c)\enspace
If $y \in P_W$ then $-y \in P_E$.   From (a), $-y \in J(e_1)$, and so $y \in J(-e_1)$.
\par
(d)\enspace
If $y \in P_S$ then $-y \in P_N$.   From (b), $-y \in J(e_2)$, and so $y \in J(-e_2)$.
\par
(e)\enspace Let $V$ be the set consisting of the four points $\pm e_1$ and $\pm e_2$.   It follows from (a)--(d) that $P \subset J(V)$.
\par
(f)\enspace  Let $\lambda > 0$.   From (e), $\lambda P \subset \lambda J(V) = J(\lambda V) \subset J(A)$.   Furthermore, $(0,0) \in J(0,0) \subset J(A)$.   Thus $\RR^2 = \bigcupn_{\lambda > 0}\lambda P \cup \{(0,0)\} \subset J(A)$.
Since $J$ is monotone, so is $S$ and, since $A$ is closed, so is $S$.
\par
(f) shows that $S$ is surjective.  Now $R(S + J) \supset R(S + S) \supset R(2S) = 2R(S)$, and so $S + J$ is surjective, giving \eqref{R1}.    However, since $G(S)$ is a proper subset of $G(J)$, $S$ is not maximally monotone thus, from \Thm{RLMAXthm} not quasidense, giving \eqref{R2}.
\par
This example is patterned after two examples (one due to S. Fitzpatrick and the other due to H. Bauschke)  that appear in \cite[p.\ 25]{MANDM} in which $S$ is monotone, $S + J$ is surjective but $S$ is not maximally monotone.   However, in both of these examples, $S$ is not closed. 
\end{example}
\section{The nonreflexive Banach space case}\label{NRsec}
We now suppose that $E$ is a nonreflexive Banach space, and we discuss a possible analog of \Thm{Rthm}.   \Thm{Rthm}($\rl$) is true in this case too, since the proof does not depend on the reflexivity of $E$ (or even the monotonicity or closedness of $S$).   We shall show in \Ex{NRex} below that \Thm{Rthm}($\lr$) fails in the most spectacular way.
\begin{example}\label{NRex}
Since $E$ is not reflexive, from James's theorem (see Pryce \cite{PRYCE} or Ruiz Gal\'an--Simons \cite{RGS}), there exists $x^* \in E^* \setminus \{0\}$ such that  $x \in E$ and $\|x\| = 1 \lr \bra{x}{x^*} < \|x^*\|$.   It follows that $x \in E \setminus \{0\} \lr \bra{x}{x^*} < \|x\|\|x^*\|$.   We now prove that $x^* \not\in R(J)$.   Indeed, if there existed $x \in E$ such that $x^* \in Jx$ then, from \eqref{J1}, $\half\|x\|^2 + \half\|x^*\|^2 = \bra{x}{x^*} < \|x\|\|x^*\|$, which is manifestly impossible.  Thus $x^* \not\in R(J)$, and so $J$ is not surjective.  Now let $S = 0$.   Then, for all $x \in E$, $_xS = 0$ and so $_xS + J$ is not surjective.   On the other hand, $S$ is (closed, monotone and) quasidense.   The fastest way of seeing this is to note that $S$ is a convex subdifferential and use \Thm{RTRthm}.   However, for the benefit of the reader, we will now give a direct proof of the quasidensity of $S$.   
\par
Let $(x,x^*) \in E \times E^*$ and $\eps > 0$.  The definition of $\|x^*\|$ provides an element $t$ of $E$ such that $\|t\| \le \|x^*\|$ and $\bra{t}{x^*} \ge \|x^*\|^2 - \eps$.   (If $x^* = 0$, we take $t = 0$). Thus, writing $s = t + x$,
\begin{align*}
\half\|s - x\|^2 + \half\|0 - x^*\|^2 + \bra{s - x}{0 - x^*}
&= \half\|t\|^2 + \half\|x^*\|^2 - \bra{t}{x^*}\\
&\le \|x^*\|^2 - \bra{t}{x^*} < \eps.
\end{align*}
Since $(s,0) \in G(S)$, this establishes the quasidensity of $S$.
\end{example}


\end{document}